\begin{document}
	
	\bibliographystyle{plain}
	
	\pagestyle{myheadings}
	\thispagestyle{empty}
	\newtheorem{theorem}{Theorem}
	\newtheorem{corollary}[theorem]{Corollary}
	\newtheorem{definition}{Definition}
	\newtheorem{guess}{Conjecture}
	\newtheorem{claim}{Claim}
	\newtheorem{problem}{Problem}
	\newtheorem{question}{Question}
	\newtheorem{lemma}[theorem]{Lemma}
	\newtheorem{proposition}[theorem]{Proposition}
	\newtheorem{observation}[theorem]{Observation}
	\newenvironment{proof}{\noindent {\bf
			Proof.}}{\hfill\rule{3mm}{3mm}\par\medskip}
	\newcommand{\remark}{\medskip\par\noindent {\bf Remark.~~}}
	\newcommand{\pp}{{\it p.}}
	\newcommand{\de}{\em}
	\newtheorem{example}{Example}

	\title{\bf The fractional chromatic number of generalized cones over graphs}
	
	\author{Jialu Zhu\thanks{Department of Mathematics, Zhejiang Normal University, Email: 709747529@qq.com, }         \and
		Xuding Zhu\thanks{Department of Mathematics, Zhejiang Normal University, Email: xdzhu@zjnu.edu.cn, Grant numbers: NSFC 11971438,  U20A2068,  ZJNSFC   LD19A010001. }}
	
	\maketitle
	
	
		\begin{abstract}
			For a graph $G$ and a positive integer $n$, the $n$th cone over $G$ is obtained from the direct product $G \times P_n$ of $G$ and a path $P_n=(0,1,\ldots, n)$, by adding a copy of $G$ on $V(G) \times \{0\}$, and  identifying $V(G) \times \{n\}$ into a single vertex $\star$. Assume $G$ and $H$ are graphs, and $h: V(H) \to \mathbb{N}$ is a mapping which assigns to each vertex $v$ of $H$ a positive integer. For each vertex $v$ of $H$, let $\Delta_{h(v)}(G,v)$ be a copy of the $h(v)$-th cone over $G$, with vertex set $V(\Delta_{h(v)}(G)) \times \{v\}$. 
			The $(H,h)$-cone over $G$ is the graph obtained from  the disjoint union of $\{\Delta_{h(v)}(G, v) : v\in V(H)\}$   by identifying $\{((x,0),v): v \in V(H)\}$ into a single vertex $(x,0)$ for each $x \in V(G)$, and adding edges $\{(\star, v) (\star, v'): vv' \in E(H)\}$. When $h(v)=n$ is a constant mapping, then $\Delta_{H,h}(G)$ is denoted by $\Delta_{H,n}(G)$. 
			In this paper, we determines the fractional chromatic number of $\Delta_{H,n}(G)$  for all $G, H$ with $\chi_f(H)\le \chi_f(G)$. 
		\end{abstract}
		
		\section{Introduction}
		
		Graphs in this paper are finite and simple, unless otherwise stated. The {\em direct product} of two graphs $G$ and $H$, denote by $G \times H$, is a graph with vertex set $\{(x,y): x \in V(G), y \in V(H)\}$, in which $(x,y)(x',y')$ is an edge if and only if $xx' \in E(G)$ and $yy' \in E(H)$.	Assume $G$ is a graph and $n$ is  a positive integer. Let $P^{\circ}_n$ be the graph obtained from the path $P_n$ on vertices $\{0, 1, \ldots, n\}$ by adding a loop at vertex $0$.  The  {\em $n$th cone over $G$}, denoted by $\Delta_n(G)$, is obtained from the direct product $G \times P^{\circ}_n$ by identifying $V(G) \times \{n\}$ into a single vertex $\star$. So 
		$$V(\Delta_n(G))= (V(G) \times \{0,1,\ldots, n-1\}) \cup  \{\star\} $$ and 
		$$E(\Delta_n(G)) = \{(x,i)(y,j): xy \in E(G), |i-j|=1 ~\text{or } i=j=0\} \cup \{(x, n-1) \star: x \in V(G)\}.$$
		
		The vertices of $\Delta_n(G)$ are divided into layers: For $i=0,1,\ldots, n-1$, the $i$th layer is $V(G) \times \{i\}$. The $0$th layer $\{(x,0): x \in V(G)\}$   induces a copy of $G$, and is called the {\em base} of $\Delta_n(G)$. Each other layer of $\Delta_n(G)$ is an independent set, adjacent only to the two neighboring layers. The vertex $\star$ is called the {\em apex vertex} of $\Delta_n(G)$, which is adjacent to all vertices in the $(n-1)$th layer.

		Observe that  $\Delta_1(G)$ is obtained from $G$ by adding an universal vertex, and $\Delta_2(G)$ is the well-known Mycielski construction over $G$. It is well-known that for any graph $G$, $\chi(\Delta_2(G)) = \chi(G)+1$ and $\omega(\Delta_2(G)) = \omega(G)$. By iterately applying Mycielski construction to $K_2$, we obtain triangle free graphs of arbitrary large chromatic number. 
		
		The cone over a graph was introduced by Stiebitz \cite{Stiebitz}  as  a generalization of Mycielski construction over a graph. Stiebitz studied the chromatic number of   $\Delta_n(G)$.  It turns out that there are graphs $G$ for which  $\chi(\Delta_n(G)) = \chi(G)$ for all $n \ge 3$.
		However,  the {\em $Z_2$-coindex} of the box complex of  a graph, which induces a lower bound for its chromatic number,  does increase when a cone construction is applied to a graph. This implies that 
		for some graphs $G$ (say  for   complete graphs), each iteration of the cone construction increases the chromatic number by $1$. The cone over graphs can be used to construct graphs of arbitrary large odd girth and arbitrary large chromatic number. 
		
		For a graph $G$, denote by $\mathcal{I}(G)$
		the family of independent sets of $G$. A \emph{fractional colouring} of   $G$ is a 
		mapping $f: \mathcal{I}(G) \to [0,1]$ such that for each vertex $v$, 
		$$\sum_{v \in I, I \in \mathcal{I}(G)} f(I) \ge 1.$$
		The {\em weight} of a fractional colouring $f$ of $G$ is
		$$w(f)= \sum_{I \in \mathcal{I}(G)} f(I).$$
		The {\em fractional chromatic number} $\chi_f(G)$ of $G$ is the minimum weight of a fractional colouring of $G$.

		The definition above shows that the fractional chromatic number of a graph $G$ is the solution of a linear programming problem. The dual of this linear programming problem defines the fractional clique number of $G$. Specifically, 
		a {\em fractional clique} of a graph $G$ is a mapping $\nu:V(G) \to [0,1]$ such that for each independent set $I$ of $G$, $$\nu(I) = \sum_{v \in I} \nu(v) \le 1.$$
		The {\em weight} of a fractional clique $\nu$ of $G$ is $$\nu(V(G)) = \sum_{v \in V(G)} \nu(v).$$ 
		The {\em fractional clique number} $\omega_f(G)$ is the maximum weight of a fractional clique of $G$. 
		It follows from the duality theorem of linear programming that  for any graph $G$, $\omega_f(G) = \chi_f(G)$.

		The fractional chromatic number of Mycielski construction over graphs was studied by Larsen, Propp and Ullman \cite{LPU}, who proved that for any graph $G$, 
		$$\chi_f(\Delta_2(G)) = \chi_f(G)+ \frac {1}{\chi_f(G)}.$$
		This result was generalized by Tardif \cite{Tardif} who proved that for any positive integer $n$, 
		$$\chi_f(\Delta_n(G)) = \chi_f(G)+ \frac {1}{\sum_{k=0}^{n-1} (\chi_f(G)-1)^k}.$$
		Thus by iterately applying cones over $K_2$ and with $n$ large enough, one can construct graphs of arbitrarily large odd girth and arbitrarily large fractional chromatic number.  The existence of graphs of large odd girth and large fractional chromatic number was proved by Erd\H{o}s \cite{erdos} by using probabilistic method.  However, not many such graphs are explicitly constructed and with their fractional chromatic numbers determined. 
		
		Given a real number $r \ge 2$, let $Kn_r$ be the infinite graph whose vertices are measurable subsets of $[0,r]$ of measure $1$, in which two vertices are adjacent if the two subsets are disjoint. A {\em homomorphism} from a graph $G$ to a graph $K$ is a mapping $f:V(G) \to V(K)$ that preserves the edges, i.e., $xy \in E(G)$ implies that $f(x)f(y) \in E(K)$.  We  write $G \to K$ if there is a homomoprhism from $G$ to $K$, and write $G \not\to K$ if such a homomorphism does not exist. 
		It is well-known that a fractional $r$-colouring of a graph $G$ is equivalent to a homomorphism from $G$ to $Kn_r$. 
		
		For (possibly infinite) graphs $G$ and $K$, the {\em exponential graph} $K^G$ has vertices all the mappings $f: V(G) \to V(K)$, in which $f$ and $g$ are adjacent if and only if for any edge $xy$ of $G$, $f(x)g(y)$ is an edge of $K$.  
		In particular, loops in $K^G$ are homomorphisms from $G$ to $K$. 
		
		Exponential graphs play a key role in the study of Hedetniemi's conjecture. We say a (possibly infinite) graph $K$ is {\em multiplicative}  with respect to the family of finite graphs if $G \not\to K$ and $H \not\to K$ implies that $G \times H \not\to K$ for any finite graphs $G$ and $H$. Hedetniemi's conjecture \cite{Hedetniemi} is equivalent to say that complete graphs $K_k$ are multiplicative, and the fractional version of Hedetniemi's conjecture is equivalent to say that for any $r \ge 2$, the inifinite graph $Kn_r$ is multiplicative with respect to the family of finite graphs. It is well-known that a (possibly infinite) graph $K$ is multiplicative
		with respect to the family of finite graphs if and only if for any finite graph $G$, $G \not\to K$ implies that $ H \to K$ for any finite subsgraph $H$ of $K^G$. 
		Shitov \cite{Shitov} refuted Hedetniemi's conjecture by constructing, for sufficiently large $k$, a graph $G$ such that $G \not\to K_k$ and $K_k^G \not\to K_k$. Later on, non-multiplicativity of smaller complete graphs were obtained in a sequence of papers \cite{Tardif2, Tardif3,Wrochna,Zhu}, also by using the concept of exponential graphs. 
		On the other hand, it was shown in \cite{Zhu-fractional} that the fractional Hedetniemi's conjecture holds, namely graphs $Kn_r$ are multiplicative with respect to the family of finite graphs.

		It follows from the definition that 
		a path of length $n$ from a loop-vertex to a constant map in $K^G$ is equivalent to a homomorphism from $\Delta_n(G)$ to $K$: Given a homomorphism $f$ from $\Delta_n(G)$ to $K$, for $i=0,1,\ldots, n$, let $f_i$ be the restriction of $f$ to $V(G) \times \{i\}$. Then 
		$(f_0,f_1, \ldots, f_n)$ is a path in $K^G$ from a loop-vertex $f_0$ to a constant map $f_n$. Note that 
		$V(G) \times \{n\}$ is identified into a single vertex $\star$, and we may think of $f_n$ as the constant map $f_n: V(G) \to V(K)$,   defined as $f_n(v)=f(\star)$ for all $v \in V(G)$.  
		
		Thus the study of the fractional chromatic number of $\Delta_n(G)$ is equivalent to the study of paths of length $n$ in $Kn_r^G$ from loop-vertices to constant maps. It follows from the result of Tardif \cite{Tardif}  that there is a path in $Kn_r^G$ from a loop-vertex to a constant map if and only if $\chi_f(G) < r$, and the length of   a path in $Kn_r^G$ from a loop-vertex to a constant map depends only on $\chi_f(G)$. For a fixed $r$, the smaller is $\chi_f(G)$, the  shorter is such a path.  
		
		It is natural to ask what is the structure of the subgraph of $Kn_r^G$ induced by the constant maps at a fixed distance from a given loop. This partly motivates the following generalization of cones over a graph.

		\begin{definition}
			Assume $H$ is a graph and $h: V(H) \to \mathbb{N}$ is a mapping. The {\em $(H,h)$-cone over $G$} is the graph defined as follows:
			
			For each vertex $v$ of $H$, let $\Delta_{h(v)}(G,v)$ be a copy of $\Delta_{h(v)}(G)$ with vertex set 
			$$V(\Delta_{h(v)}(G)) \times \{v\} =\{(u,v): u \in \Delta_{h(v)}(G)\}.$$
			The $(H,h)$-cone over $G$, denoted by $\Delta_{H,h}(G)$, is   obtained from the disjoint union of $\{\Delta_{h(v)}(G, v) : v \in V(H)\}$ by identifying $\{((x,0),v): v \in V(H)\}$ into a single vertex $(x,0)$ for each $x \in V(G)$, and adding edges $\{(\star,  v) (\star,  v'): vv' \in E(H)\}$.   
		\end{definition}

		If $h(v)=n$ for all $v \in V(H)$, then we denote $\Delta_{H,h}(G)$ by $\Delta_{H,n}(G)$. 
		Thus  $\Delta_n(G) \equiv  \Delta_{K_1,n}(G)$. 
		
		This paper determines the fractional chromatic number of $\Delta_{H,n}(G)$ for all graphs $G,H$ with $\chi_f(H) \le \chi_f(G)$, and for all positive integers $n$. It turns out that $\chi_f(\Delta_{H,n}(G))$ can be expressed as a function of   $\chi_f(G), \chi_f(H)$ and $ n$. This might be useful in some graph construction. Indeed, an early motivation of this construction was to find smaller counterexamples to Hedetniemi's conjecture. Although such graphs can be used in the construction of counterexamples to Hedetniemi's conjecture, much  smaller counterexamples  are found by other means. 
		
		Nevertheless,  the graph operation $\Delta_{H,n}(G)$ is a natural generalization of the cone construction over a graph. We believe they are  of independent interests. 
		In particular, the following observation, which follows from the definition, show that the fractional chromatic number of such graphs are related to the structure of exponential graphs $Kn_r^G$. 
		
		\begin{observation}
			\label{obs-a} 
			For $h: V(H) \to \mathbb{N}$,	$\chi_f(\Delta_{H,h}(G)) \le r$ if and only if there are constant maps $\{\phi_v: v \in V(H)\}$ from $G$ to $Kn_r$ such that the induced subgraph of $Kn_r^G$ contains $H$ as a spanning subgraph,  and these constant maps  are connected to a loop-vertex in $Kn_r^G$ with paths of the corresponding lengths $h(v)$. 
		\end{observation}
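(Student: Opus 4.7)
The plan is to unpack both sides via the standard reformulation (recalled in the excerpt) that a fractional $r$-colouring of a graph $K$ is the same thing as a homomorphism $K \to Kn_r$. Thus $\chi_f(\Delta_{H,h}(G)) \le r$ is equivalent to the existence of a homomorphism $f: \Delta_{H,h}(G) \to Kn_r$, and the proof reduces to decoding such an $f$ into the data of constant maps $\phi_v$ together with paths in $Kn_r^G$, and conversely encoding such data back into an $f$.

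For the forward direction, starting from a homomorphism $f: \Delta_{H,h}(G) \to Kn_r$, I would for each $v \in V(H)$ define $\phi_v: V(G) \to V(Kn_r)$ to be the constant map taking the value $f((\star,v))$. Restricting $f$ to the $v$-th copy $\Delta_{h(v)}(G,v)$ and invoking the correspondence between homomorphisms $\Delta_n(G) \to K$ and paths in $K^G$ (spelled out in the paragraph immediately preceding the definition of the $(H,h)$-cone), I obtain a path of length $h(v)$ in $Kn_r^G$ from a loop-vertex $\psi_v$ to $\phi_v$. Because the bases $\{(x,0): x \in V(G)\}$ of the copies are identified in $\Delta_{H,h}(G)$, all the $\psi_v$ coincide with a single loop-vertex $\psi$ determined by the restriction of $f$ to the common base. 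The fact that $f$ preserves each apex edge $(\star,v)(\star,v')$ with $vv' \in E(H)$ translates into $\phi_v \phi_{v'} \in E(Kn_r^G)$, so $v \mapsto \phi_v$ realises $H$ inside the subgraph of $Kn_r^G$ induced by $\{\phi_v : v \in V(H)\}$.

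For the reverse direction, given constant maps $\{\phi_v\}$ realising $H$ in $Kn_r^G$, a common loop-vertex $\psi$, and paths of the prescribed lengths $h(v)$ from $\psi$ to $\phi_v$, I would apply the same correspondence in reverse to produce homomorphisms $f_v: \Delta_{h(v)}(G,v) \to Kn_r$ from each path. Because all paths begin at $\psi$, the restrictions of the $f_v$ to the base layer agree, so the $f_v$ glue consistently across the identified bases into a single map $f$. The adjacencies $\phi_v \phi_{v'} \in E(Kn_r^G)$ guarantee that the values $f((\star,v))$ and $f((\star,v'))$ are adjacent in $Kn_r$, so $f$ also respects the apex edges of $\Delta_{H,h}(G)$ and is the desired homomorphism.

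The statement is essentially an unwinding of definitions rather than a theorem with a hard step, so no serious obstacle arises. The only delicate points are to check that the loop-vertex $\psi_v$ produced from the path corresponding to the $v$-th copy coincides for every $v$ (which follows from the identification of the bases), and that adjacency of constant maps in $Kn_r^G$ correctly reflects adjacency of their values in $Kn_r$ (which holds as soon as $G$ has at least one edge, the edgeless case being trivial on both sides). Both verifications are routine.
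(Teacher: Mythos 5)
Your proposal is correct and matches the paper's (implicit) approach: the paper states Observation~\ref{obs-a} without proof, remarking only that it ``follows from the definition,'' and your argument is precisely the intended unwinding of the two equivalences (fractional $r$-colouring $\Leftrightarrow$ homomorphism to $Kn_r$, and homomorphism $\Delta_n(G) \to K$ $\Leftrightarrow$ walk in $K^G$ from a loop-vertex to a constant map), together with the observation that the identified bases force all the loop-vertices $\psi_v$ to coincide. Your explicit note that adjacency of constant maps in $Kn_r^G$ reflects adjacency of their values in $Kn_r$ only when $G$ has an edge is a small but worthwhile precision that the paper glosses over.
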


	\section{The main results}
	
	Assume $G$ and $H$ are graphs and $n$ is a positive integer. 
	Let 
	\[
	\tau(G,n)=\frac{1}{\sum_{k=0}^{n-1}(\chi_f(G)-1)^k},
	\]
	\[
	\tau'(G,n, H) =\frac{1}{\chi_f(H)(\sum_{k=0}^{n-1}(\chi_f(G)-1)^k)+1-\chi_f(H)}.
	\]

	\begin{theorem}
		\label{thm-main}
		Assume $H$ is a graph with $\chi_f(H) \le \chi_f(G)$.     Then 
		\[
		\chi_f(\Delta_{H,n}(G))= \begin{cases} 
		\chi_f(G) + \tau(G,n) &\text{ if $n $ is even}, \cr
		\chi_f(G)+ \chi_f(H) \tau'(G,n, H) &\text{ if $n $ is odd}.
		\end{cases}
		\]
	\end{theorem}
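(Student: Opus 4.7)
Set $c:=\chi_f(G)$, $h:=\chi_f(H)$, and $S:=\sum_{k=0}^{n-1}(c-1)^k$, and let $r$ denote the right-hand side of the asserted formula. The plan is to establish the matching inequalities $\chi_f(\Delta_{H,n}(G))\ge r$ and $\chi_f(\Delta_{H,n}(G))\le r$ by exhibiting an explicit fractional clique and an explicit fractional colouring, respectively. The parity of $n$ is the source of most of the subtlety.

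\textbf{Lower bound.} For $n$ even this is immediate: $\Delta_{H,n}(G)$ contains $\Delta_n(G)$ as the subgraph induced by any single cone $\Delta_n(G,v)$ together with the common base, so Tardif's theorem yields $\chi_f(\Delta_{H,n}(G))\ge \chi_f(\Delta_n(G))=c+1/S=r$. For $n$ odd I build a fractional clique of the shape
\[
\rho(x,0)=\mu(x),\qquad \rho((x,i),v)=\lambda_i\,\mu(x)\ \ (1\le i\le n-1),\qquad \rho(\star_v)=\beta\,\nu(v),
\]
where $\mu,\nu$ are optimal fractional cliques of $G,H$ of weights $c,h$. The extremal independent sets under this weighting are ``alternating'' patterns of maximum $\mu$-weight subsets along the cone layers, possibly augmented by an independent subset of apex vertices contributing via $\nu$. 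The constraint $\rho(I)\le 1$ on these extremal sets reduces to a linear recursion for the $\lambda_i$, whose boundary condition couples $\lambda_{n-1}$ with $\beta$ through the adjacency of layer $n-1$ to $\star_v$. Solving the recursion gives $\lambda_i$ as geometric-sum expressions in $c-1$, and the total weight $\sum_u\rho(u)=c+h\beta$ evaluates precisely to $c+h\tau'(G,n,H)$.

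\textbf{Upper bound.} By Observation~\ref{obs-a}, with $r$ the claimed value it suffices to exhibit a loop-vertex $\phi_0\in Kn_r^G$ and constant maps $\{C_v:v\in V(H)\}$ such that $v\mapsto C_v$ is a homomorphism $H\to Kn_r$ and each $C_v$ is joined to $\phi_0$ by a walk of length $n$ in $Kn_r^G$. I place $\phi_0$ inside $[0,c]\subset[0,r]$ via an optimal fractional $c$-colouring of $G$, and then build sequences $\phi_0,\phi^v_1,\ldots,\phi^v_{n-1},C_v$ by Tardif's procedure, all the while tracking how the measure of each $\phi^v_i(x)$ splits between the base region $[0,c]$ and the extra region $[c,r]$. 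The recursion multiplies the relevant free measure by $c-1$ at each step; after $n$ steps, when $n$ is even the free region available for the apex lies in $[0,c]$ (measure $c\ge h$, so an $H$-embedding fits and $r-c=1/S$ already suffices), whereas when $n$ is odd it lies in the extra interval $[c,r]$, forcing $r-c\ge h/(hS+1-h)$ to accommodate an $H$-embedding.

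\textbf{Main obstacle.} The hardest step in both directions is pinning down the extremal objects: the extremal independent sets that tighten the fractional-clique inequality, and the extremal measure-splits that witness the Kneser walk. The even/odd dichotomy is intrinsic to how edge-disjointness propagates along the cone recursion and is precisely why the formula depends on $h$ only when $n$ is odd; the verification that the clique value and the colouring value coincide is then a routine, though delicate, calculation.
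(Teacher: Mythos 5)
Your lower bound for even $n$ and the general two-inequality strategy coincide with the paper. The real issues are in the odd-$n$ fractional clique and, to a lesser extent, in the sketchy upper bound.

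\textbf{Odd-$n$ fractional clique.} Your proposed weight function
\[
\rho(x,0)=\mu(x),\quad \rho((x,i),v)=\lambda_i\,\mu(x)\ (1\le i\le n-1),\quad \rho(\star_v)=\beta\,\nu(v)
\]
is structurally wrong in two places, and the total you compute from it is inconsistent. First, the interior-layer weights on the cone $\Delta_n(G,v)$ must themselves carry a factor $\eta(v)$ (your $\nu(v)$), not just the apex. With flat weights $\lambda_i\mu(x)$ across all cones, the total interior-layer contribution is $|V(H)|\cdot\chi_f(G)\sum_i\lambda_i$, which scales with the \emph{number} of vertices of $H$ rather than with $\chi_f(H)$, so your claimed identity $\sum_u\rho(u)=c+h\beta$ simply does not hold; moreover the independence constraint for sets that occupy many cones simultaneously would force $\lambda_i\to 0$ as $|V(H)|$ grows, degenerating the bound. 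Second, the base-layer weight cannot be the full $\mu(x)$: the paper deliberately subtracts a correction proportional to $\alpha_{n-1}$ on layer $0$ (yielding $\theta(x,0)=(\alpha_0-(1-\tfrac{1}{\chi_f(H)})\alpha_{n-1})\nu(x)$) precisely to compensate for the base being shared by all cones. Concretely, the paper sets $\theta_v((x,i),v)=\alpha_i\nu(x)\eta(v)$, $\theta_v(\star_v)=\tau'\eta(v)$, and $\nu'(x,0)=\sum_v\theta_v((x,0),v)$, and then the single-cone bound (Lemma~\ref{weight1}) combines with $\sum_v\eta(v)=\chi_f(H)$, $\sum_{v\in S}\eta(v)\le 1$, $\nu(J(0))\le 1$ to close the argument. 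None of these cross-cone mechanisms appear in your sketch, and without them the key inequality $\nu'(I)\le 1$ for independent sets $I$ that meet multiple cones will not follow from the per-cone recursion you describe.

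\textbf{Upper bound.} You propose to realize the bound via walks in $Kn_r^G$ and measure-bookkeeping, invoking Observation~\ref{obs-a}. That is the conceptual dual of what the paper does, and in spirit it is the ``right'' picture; but the paper's actual proof instead first replaces $H$ by a Kneser graph $K(s,t)$ (using $H\to K(s,t)$ whenever $\chi_f(H)\le s/t$ and monotonicity of $\chi_f(\Delta_{H,n}(G))$ under $H\to H'$), and then exhibits an explicit fractional colouring by listing a small family of independent sets $I_{k,j}$, $I_k$, $O_j$ with geometric-series weights $\sigma_k,\sigma'_k,\delta_k$ and verifying the vertex covering constraints by hand. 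The Kneser reduction is a real step: it discretizes the problem so that the measure-splitting you gesture at becomes a finite sum. Your plan might be pushed through, but as written it omits both the reduction and the recursion verification; the ``free region'' argument would need to track, for each $v$, a consistent decomposition of $\phi_0^v,\ldots,\phi^v_{n-1},C_v$ sharing the same $\phi_0$, and simultaneously ensure $\{C_v\}$ forms a copy of $H$ inside $Kn_r$ — this simultaneity across $v$ is exactly where the even/odd parity and the bound $\chi_f(H)\le\chi_f(G)$ bite, and it is glossed over.

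In summary: the even-$n$ lower bound is fine; the odd-$n$ clique construction has a concrete structural error (missing $\eta(v)$-modulation and base correction) that breaks both the weight computation and the feasibility proof; and the upper bound is a plausible alternative route but is too sketchy to count as a proof — in particular the Kneser-graph reduction, which the paper leans on, is absent.
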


	The remainder of this paper is devoted to the proof of Theorem \ref{thm-main}. 
	If $n=1$, then $\Delta_{H,n}(G)$ is the join of $G$ and $H$, and hence   $\chi_f(\Delta_{H,1}(G)) = \chi_f(G) +\chi_f(H)$ and  Theorem \ref{thm-main} is true. For the remainder of this paper, we assume that $n \ge 2$. 
	
	The set $\{(x,0): x \in V(G)\}$ induces a copy of $G$, and is called the base of $\Delta_{H,h}(G)$. 
	Note that the base of $\Delta_{H,h}(G)$ is the identification of the bases of $\Delta_{h(v)}(G, v)$. For convenience, we shall view $\{(x,0): x \in V(G)\}$ also as the base of $\Delta_{h(v)}(G, v)$. In other words, we treat $\Delta_{h(v)}(G, v)$ as a subgraph of 
	$\Delta_{H,h}(G)$, and when we discuss about this subgraph, we treat the set  $\{(x,0): x \in V(G)\}$ the same as $\{((x,0), v): x \in V(G)\}$.

	The following observation will be frequently used.
	
	\begin{observation}
		\label{obs-ind}
		A subset $I$ of $\Delta_{H,h}(G)$ is independent if and only if 
		\begin{enumerate}
			\item the restriction of $I$ to $V(\Delta_{h(v)}(G,v))$ is an independent set of $\Delta_{h(v)}(G,v)$ for each vertex $v$ of $H$, and 
			\item the set $\{v \in V(H): (\star, v) \in I\}$ is an independent set of $H$.
		\end{enumerate}  
	\end{observation}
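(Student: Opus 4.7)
The plan is to verify the observation by directly analyzing the edge set of $\Delta_{H,h}(G)$. Reading off the definition, every edge of $\Delta_{H,h}(G)$ falls into one of two types: either (a) it lies inside some copy $\Delta_{h(v)}(G,v)$ (these include edges among the shared base vertices, vertical edges between consecutive layers of that copy, and edges joining the apex $(\star,v)$ to vertices of the top layer of that copy), or (b) it is an apex-to-apex edge $(\star,v)(\star,v')$ where $vv' \in E(H)$. Crucially, no ``cross-edges'' between vertices of $\Delta_{h(v)}(G,v)$ and $\Delta_{h(v')}(G,v')$ with $v \ne v'$ arise except via the identifications of the bases and via the apex edges prescribed by $H$, because the disjoint union step introduces no other adjacencies. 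Establishing this dichotomy is the only real content and is essentially a bookkeeping check against the definition.

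Given this dichotomy, both directions are immediate. For necessity, if $I$ is independent in $\Delta_{H,h}(G)$ then its restriction to any induced subgraph is independent; since each $\Delta_{h(v)}(G,v)$ is an induced subgraph, condition~(1) holds. Condition~(2) follows because whenever $vv' \in E(H)$, the vertices $(\star,v)$ and $(\star,v')$ are adjacent, so they cannot both lie in $I$.

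For sufficiency, assume (1) and (2) hold and let $e$ be any edge of $\Delta_{H,h}(G)$. If $e$ is of type~(a), it lies inside some $\Delta_{h(v)}(G,v)$, and condition~(1) prevents both endpoints from being in $I$. If $e$ is of type~(b), then $e = (\star,v)(\star,v')$ with $vv' \in E(H)$, and condition~(2) prevents both endpoints from being in $I$. Thus no edge has both endpoints in $I$, so $I$ is independent. There is no serious obstacle here; the statement is a direct translation of the construction, recorded for later convenience in reasoning about fractional colourings and cliques of $\Delta_{H,h}(G)$.
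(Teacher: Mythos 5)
Your proof is correct and is exactly the direct verification from the definition that the paper has in mind; the paper records this as an unproved observation precisely because the edge dichotomy you spell out (edges either lie inside a single copy $\Delta_{h(v)}(G,v)$, or are apex edges $(\star,v)(\star,v')$ with $vv'\in E(H)$) is immediate from the construction. Nothing to add or correct.
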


	\section{Fractional cliques in $\Delta_{H,n}(G)$}
	If $n $ is even, then since $\Delta_n(G)$ is a subgraph of $\Delta_{H,n}(G)$, we have    $\chi_f(\Delta_{H,n}(G)) \ge \chi_f(\Delta_n(G)) = \chi_f(G)+\tau(G,n)$. Therefore for $n$ even, it only remains to show that $\chi_f(\Delta_{H,n}(G))\le \chi_f(G)+\tau(G,n)$, which is done in the next section.
	In the remainder of this section, we prove that if    $n \ge 3$ is odd, then  $$\chi_f(\Delta_{H,n}(G)) \ge \chi_f(G) + \chi_f(H)\tau'(G,n, H).$$ 
	For this purpose, it suffices to  construct a fractional clique $\nu'$ of $\Delta_{H,n}(G)$ of  weight $\chi_f(G) + \chi_f(H)\tau'(G,n,H)$.

	For simplicity, let 
	$$\tau'= \tau'(G,n,H) =\frac{1}{\chi_f(H) (\sum_{k=0}^{n-1}(\chi_f(G)-1)^k)+1-\chi_f(H)}.$$

	Let $$\nu: V(G) \to [0,1] \text{ and } \eta: V(H) \to [0,1]$$  be  fractional cliques of $G$ and $H$  respectively, with maximum weights.  We may assume that $G$ is {\em critical}, i.e., every proper subgraph of $G$ has smaller fractional chromatic number. Hence   $\nu(x)>0$ for each vertex $x$ of $G$.
	We shall construct a fractional clique $\nu'$ of $\Delta_{H,n}(G)$. 
	
	First we define a weight function $\theta$  of $\Delta_n(G)$ as follows:
	
	We set $$\theta(\star)=\tau'.$$
	For $i=0,1,\ldots, n-1$,  let $$\alpha_i =   \tau' (\chi_f(G)-1)^{n-1-i}.$$ 
	For $1 \le i \le n-1$, 
	\begin{equation*}
	\theta(x, i)= \alpha_i \nu(x), 
	\end{equation*}
	and let 
	\begin{equation*}
	\theta(x, 0 )= 
	\left(  \alpha_0- \left(1-\frac{1}{\chi_f(H)}\right)\alpha_{n-1}\right) \nu(x).
	\end{equation*}

	For each vertex $v$ of $H$, let $\theta_v: \Delta_n(G,v) \to [0,1]$ be defined as 
	$$\theta_v(x,v)= \theta(x) \eta(v).$$

	Note that 
	\begin{equation}
	\label{eqn-sum}
 \alpha_0+\alpha_1+\alpha_2+ \ldots +   \alpha_{n-2}+\frac{1}{\chi_f(H)}\alpha_{n-1}   = \frac{1}{\chi_f(H)}.
	\end{equation}    	Hence
	$$\theta_v(\Delta_{n}(G,v) - \{(\star,v)\}) = \eta(v)\frac{\chi_f(G)}{\chi_f(H)},$$
	and 
	$$\theta_v(\Delta_{n}(G,v)  ) =\eta(v) \left(\frac{\chi_f(G)}{\chi_f(H)} + \tau'\right).$$
	
	Let $\nu': V(\Delta_{H,n}(G)) \to [0,1]$ be defined as
	\[
	\nu'(z)= \begin{cases} \sum_{v \in V(H)} \theta_v((x,0),v) &\text{ if $z=(x,0)$},\cr
	\theta_v(z) &\text{ if $z \in V(\Delta_n(G,v)) - \{((x,0),v): x \in V(G)\}$}.
	\end{cases}
	\]

	Then 
	
	\begin{eqnarray*}
		\sum_{z \in V(\Delta_{H,n}(G))} \nu'(z) &=& \sum_{v \in V(H)} \theta_v(V(\Delta_n(G,v)) ) \\
		&=&   \sum_{v \in V(H)} \eta(v) \left(\frac{\chi_f(G)}{\chi_f(H)} + \tau'\right) \\ 
		&=& \chi_f(G)  + \tau' \chi_f(H).
	\end{eqnarray*} 
	
	We shall show that $\nu'$ is a fractional clique of $\Delta_{H,n}(G)$. 
	Since $\alpha_i >0$ for each $i$ and  $\alpha_0\ge \alpha_{n-1}$, we know that    $\nu'(z) \ge 0$ for $z \in V(\Delta_{H,n}(G))$.
	It remains to show the following:
	
	\bigskip
	(*) {\em 	 For each independent set $I$ of $\Delta_{H,n}(G)$, $\nu'(I) \le 1$.}
	\bigskip

	Given an independent $J$ of $\Delta_{n}(G)$,  
	$i \in \{0, 1,\ldots,   n-1\}$, let 
	$$J(i) = \{x \in V(G): (x,i) \in J\},$$
	and 
	\[
	J(n) = \begin{cases} \{\star\} & \text{if $\star  \in J$}, \cr
	\emptyset &\text{ otherwise.} \cr
	\end{cases}
	\] 
	Let
	\begin{equation*}
	\beta(J)=\left\{
	\begin{array}{rcl}
	\sum_{i=0}^{n-1}\alpha_i-(1-\frac{1}{\chi_f(H)})\alpha_{n-1}\nu(J(0))   & & \text{ if $\star \in J$, } \\
	\sum_{i=0}^{n-2}\alpha_i+\frac{1}{\chi_f(H)}\alpha_{n-1}\nu(J(0))   & & \text{ if  $\star  \notin J$. } \\
	\end{array} \right.
	\end{equation*}
	\begin{lemma}\label{weight1}
		For any independent set $J$ of $\Delta_n(G)$,
		$\theta(J) = \sum_{u \in J} \theta(u) \le \beta(J).$
	\end{lemma}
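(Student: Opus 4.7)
The plan is a direct calculation in which I would establish a single uniform layer bound and then verify the inequality in each of the two branches of the definition of $\beta(J)$ by substitution.

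First I would show the key preliminary that $\nu(J(i))\le 1$ for every $0\le i\le n-1$. For $i=0$ this is immediate: the base of $\Delta_n(G)$ induces a copy of $G$, so $J(0)$ is an independent set of $G$ and $\nu(J(0))\le 1$ because $\nu$ is a fractional clique of $G$. For $i\ge 1$, the edges of $\Delta_n(G)$ between layer $i-1$ and layer $i$ are precisely the lifts of the edges of $G$, so the independence of $J$ forces $J(i-1)\cup J(i)$ to be an independent set of $G$, whence $\nu(J(i))\le\nu(J(i-1)\cup J(i))\le 1$.

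Next I would substitute the defining formulas into both sides of $\theta(J)\le\beta(J)$ and simplify. In the case $\star\in J$ we have $J(n-1)=\emptyset$ and $\theta(\star)=\tau'=\alpha_{n-1}$; after cancelling the common term $-(1-\tfrac{1}{\chi_f(H)})\alpha_{n-1}\nu(J(0))$ and the additive $\alpha_{n-1}$ from both sides, the inequality becomes
\[
\alpha_0\,\nu(J(0))+\sum_{i=1}^{n-2}\alpha_i\,\nu(J(i))\ \le\ \sum_{i=0}^{n-2}\alpha_i,
\]
which is immediate from the layer bound. In the case $\star\notin J$, the $\nu(J(0))$-coefficients on the two sides combine via $\bigl(1-\tfrac{1}{\chi_f(H)}\bigr)+\tfrac{1}{\chi_f(H)}=1$, reducing the inequality to
\[
(\alpha_0-\alpha_{n-1})\,\nu(J(0))+\sum_{i=1}^{n-1}\alpha_i\,\nu(J(i))\ \le\ \sum_{i=0}^{n-2}\alpha_i.
\]
Since $\chi_f(G)\ge 2$ one has $\alpha_0=\tau'(\chi_f(G)-1)^{n-1}\ge\tau'=\alpha_{n-1}$, so the coefficient of $\nu(J(0))$ is nonnegative, and the layer bound yields the upper estimate $(\alpha_0-\alpha_{n-1})+\sum_{i=1}^{n-1}\alpha_i=\sum_{i=0}^{n-2}\alpha_i$, as required.

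The main work, such as it is, is the bookkeeping in the two cases; there is no genuine combinatorial obstacle. The content of the lemma is that the layer-$0$ correction $-(1-\tfrac{1}{\chi_f(H)})\alpha_{n-1}\nu(x)$ built into $\theta$ is engineered so that, in the $\star\in J$ branch, it exactly cancels the penalty $-(1-\tfrac{1}{\chi_f(H)})\alpha_{n-1}\nu(J(0))$ inside $\beta(J)$, while in the $\star\notin J$ branch it combines with the bonus $\tfrac{1}{\chi_f(H)}\alpha_{n-1}\nu(J(0))$ of $\beta(J)$ to produce the telescoping coefficient $\alpha_0-\alpha_{n-1}$. Once this cancellation is spotted, the universal bound $\nu(J(i))\le 1$ closes both cases.
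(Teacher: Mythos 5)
Your proof has a critical gap in the very first step. You claim that for $i\ge 1$, independence of $J$ forces $J(i-1)\cup J(i)$ to be an independent set of $G$, and hence $\nu(J(i))\le 1$. This is false. The edges of $\Delta_n(G)$ between consecutive layers do forbid $G$-edges \emph{between} $J(i-1)$ and $J(i)$, but for $i\ge 1$ the $i$th layer of $\Delta_n(G)$ is an independent set in the cone, so there is no constraint at all on $G$-edges \emph{within} $J(i)$. Thus $J(i)$ (and hence $J(i-1)\cup J(i)$) is typically not independent in $G$, and $\nu(J(i))$ can be as large as $\chi_f(G)$. For a concrete failure, take $G=C_5$ with the uniform fractional clique $\nu\equiv 2/5$; an independent set $J$ of $\Delta_n(C_5)$ with $J(0)=\{0\}$ may have $J(1)=\{0,2,3\}=V(G)\setminus N(\{0\})$, giving $\nu(J(1))=6/5>1$. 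Once the ``uniform layer bound'' collapses, both branches of your case analysis lose their justification, and the argument does not establish the lemma.

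This is not a presentational slip; it misses the actual difficulty. The lemma is delicate precisely because the layers for $i\ge 1$ can be large, and the inequality only holds because of the interaction between the geometric weights $\alpha_i$ and the fact that adjacent layers partition into a set and its non-neighbourhood. The paper's proof is accordingly much heavier: it argues by contradiction, takes an extremal $J$ maximizing $\theta(J)-\beta(J)$ (with tie-breaking by $\nu(J(0))+\nu(J(1))$ and by ``level''), and uses repeated exchange arguments between layers together with the bound $\chi_f(Q)\ge \nu(V(Q))/\alpha$ (their Observation~\ref{obs-1}) to show the extremal $J$ must have the alternating structure $J(0),\,V(G)\setminus N(J(0)),\,J(0),\,\dots$. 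Only in that reduced situation is the inequality verified by a calculation. A correct proof must recover this control over the middle layers; a termwise bound $\nu(J(i))\le 1$ is simply not available.
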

	
	Assume Lemma \ref{weight1} holds and $I$ is an independent set of $\Delta_{H,n}(G)$. Let 
	$$S=\{v \in V(H): (\star, v) \in I\}.$$
	For any vertex $v$ of $H$, let $I(v)$ be the restriction of $I$ to $ \Delta_{n}(G,v)$. By Observation \ref{obs-ind}, $I(v)$ is an independent set of $\Delta_{n}(G,v)$ and $S$ is an independent set of $H$. Note that $I(v)(0) = I(0)$ for all $v \in V(H)$. We  have 
	\begin{eqnarray*}
		\nu'(I) &= &\sum_{v\in V(H)}\theta_v(I(v)) 
		\le \sum_{v\in V(H)}\beta(I(v))\eta(v) \\
		&=&\sum_{v\in S} \left(	\sum_{i=0}^{n-1}\alpha_i-(1-\frac{1}{\chi_f(H)})\alpha_{n-1}\nu(I(0)) \right)  \eta (v) + \sum_{v\in V(H)-S} \left(	\sum_{i=0}^{n-2}\alpha_i+\frac{1}{\chi_f(H)}\alpha_{n-1}\nu(I(0))  \right) \eta (v)\\
		&=&\chi_f(H)(\alpha_0+\ldots+\alpha_{n-2})\\
		&+& \sum_{v\in S} \left(\alpha_{n-1}-(1-\frac{1}{\chi_f(H)})\alpha_{n-1}\nu(I(0)) \right)  \eta (v)+  \sum_{v\in V(H )-S} \left(\frac{1}{\chi_f(H)}\alpha_{n-1}\nu(I(0))  \right) \eta (v)\\
		&=&\chi_f(H)(\alpha_0+\ldots+\alpha_{n-2})\\
		&+& \sum_{v\in S} \left(  \alpha_{n-1}-\alpha_{n-1}\nu(I(0))   \right)  \eta (v)+\sum_{v\in V(H)} \left(\frac{1}{\chi_f(H)}\alpha_{n-1}\nu(I(0))  \right) \eta(v).	
	\end{eqnarray*}
	
	As $\sum_{v \in S} \eta(v) \le 1$, $\nu(I(0))\le 1$ and  $\sum_{v \in V(H)} \eta(v)=\chi_f(H)$,
	$$ \sum_{v\in S} \left(\alpha_{n-1}-\alpha_{n-1}\nu(I(0)) \right)  \eta (v) +  \sum_{v\in V(H)} \left(\frac{1}{\chi_f(H)}\alpha_{n-1}\nu(I(0))  \right) \eta(v) \le \alpha_{n-1}.$$
	Hence by (\ref{eqn-simga}), $$	\nu'(I) \le \chi_f(H)(\alpha_0+\ldots+\alpha_{n-2}) + \alpha_{n-1} = 1.$$

	Thus   to prove Statement (*), it suffices to prove  Lemma \ref{weight1}. 
	\bigskip
	\noindent

	{\bf Proof of Lemma \ref{weight1}}	We shall frequently use the following equality, which follows directly from the definition.
	For   $0 \le  k \le  n-2$, 
	\begin{equation}
	\label{eqn-alpha} 
	\alpha_k+\alpha_{k+1} = \alpha_{k+1} \chi_f(G).  
	\end{equation}  
	We shall also frequently use the following observation.
	\begin{observation}
		\label{obs-1}
		If $\nu$ is a weight assignment  to the vertices of a graph $Q$ and $\alpha$ is the maximum weight of an independent set, then
		$\chi_f(Q) \ge \frac{\nu(V(Q))}{\alpha}$. 
	\end{observation}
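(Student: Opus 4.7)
My plan is to prove this directly from the definition of the fractional chromatic number via a double-counting argument, which avoids any delicate appeal to LP duality and requires only that $\nu$ be nonnegative.

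First I would fix an optimal fractional colouring $f: \mathcal{I}(Q) \to [0,1]$ of $Q$, so that $w(f) = \sum_{I \in \mathcal{I}(Q)} f(I) = \chi_f(Q)$ and, by the defining inequality of a fractional colouring, $\sum_{I \ni v} f(I) \ge 1$ for every vertex $v \in V(Q)$. Multiplying the inequality for $v$ by the nonnegative weight $\nu(v)$ and summing over $v$ gives
\[
\nu(V(Q)) \;=\; \sum_{v \in V(Q)} \nu(v) \;\le\; \sum_{v \in V(Q)} \nu(v) \sum_{I \ni v} f(I).
\]
Next I would swap the order of summation on the right-hand side, rewriting it as $\sum_{I \in \mathcal{I}(Q)} f(I) \, \nu(I)$. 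Since $\nu(I) \le \alpha$ for every independent set $I$ by the definition of $\alpha$, this is at most $\alpha \sum_I f(I) = \alpha \, \chi_f(Q)$. Dividing by $\alpha$ (the case $\alpha = 0$ forces $\nu \equiv 0$, where both sides of the desired inequality vanish and the claim is trivial) yields $\chi_f(Q) \ge \nu(V(Q))/\alpha$.

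Alternatively, one could deduce the observation from the duality $\chi_f(Q) = \omega_f(Q)$ stated earlier in the paper: set $\nu' = \nu/\alpha$, observe that $\nu'(v) \le 1$ because each singleton $\{v\}$ is independent, and check that $\nu'(I) = \nu(I)/\alpha \le 1$ for every independent set $I$, so $\nu'$ is a valid fractional clique of weight $\nu(V(Q))/\alpha$, which is therefore a lower bound on $\omega_f(Q) = \chi_f(Q)$. There is no serious obstacle here: this is an elementary observation, and the only care required is to handle the degenerate case $\alpha = 0$ and to verify that the scaled weights lie in $[0,1]$ if the duality route is preferred.
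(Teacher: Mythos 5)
Your proposal is correct; note that the paper states this observation without any proof, treating it as a standard fact. Your double-counting argument (and the alternative of rescaling $\nu$ by $\alpha$ to obtain a fractional clique and invoking $\omega_f(Q)=\chi_f(Q)$) is exactly the routine justification the paper leaves implicit, and your care about nonnegativity of $\nu$ and the degenerate case $\alpha=0$ is appropriate but does not change anything in the paper's applications, where $\nu$ is a restriction of a fractional clique.
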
 
	For each $J\in \mathcal{I}(\Delta_{n}(G))$, the {\em level}  of $J$ is the maximal integer $i$ such that 
	$$J(0)=J(2)=\ldots=J(2\lfloor \frac{i}{2} \rfloor) $$ and 
	$$J(1)=J(3)=\ldots=J(2\lfloor \frac{i-1}{2} \rfloor+1)=V(G)-N(J(0)).$$
	Assume Lemma \ref{weight1} is not true. Let  
	$J\in \mathcal{I}(\Delta_{n}(G))$ be an independent set such that 
	\begin{itemize}
		\item[(1)] $\theta(J)- \beta(J)>0$ is maximum.
		\item[(2)]  Subject to $(1)$, $\nu(J(0))+\nu(J(1))$ is maximum.
		\item[(3)] Subject to $(1)$  and $(2)$, the level $i_0$ of $J$ is maximum.
	\end{itemize}
	\begin{claim}
		\label{clm-1}
		$i_0\ge 1$.
	\end{claim}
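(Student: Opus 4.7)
Suppose for contradiction that $i_0 = 0$, i.e.\ $J(1) \neq V(G) - N(J(0))$. Because $J$ is independent in $\Delta_n(G)$, the edges between layers $0$ and $1$ force $J(1) \subseteq V(G) - N(J(0))$, so the containment is in fact strict. Since $G$ is critical, $\nu(y) > 0$ for every $y \in V(G)$, and hence $\nu(J(1)) < \nu(V(G) - N(J(0)))$.

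The idea is to construct an independent set $J'$ of $\Delta_n(G)$ with $J'(0) = J(0)$ and $J'(1) = V(G) - N(J(0))$ that contradicts one of the three extremal conditions defining $J$. The most economical choice is to push the missing vertices $Y := V(G) - N(J(0)) - J(1)$ into layer $1$ and to delete the newly-created conflicts from layer $2$: set $J'(1) = J(1) \cup Y$, $J'(2) = J(2) - N(Y)$, and leave $J'(i) = J(i)$ for $i = 0$ or $i \ge 3$, keeping the $\star$-status of $J'$ equal to that of $J$. Independence is preserved because $Y$ is by construction disjoint from $N(J(0))$, and the deletion in layer $2$ removes exactly the new conflicts; moreover, shrinking $J(2)$ only relaxes the constraint on $J(3)$, so no further cascade is needed. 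A short case check (notably for $n = 3$, where $\star \in J$ forces $J(2) = \emptyset$ and hence $J'(2) = \emptyset$) confirms that $\beta(J') = \beta(J)$.

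The whole argument then hinges on the weight comparison
\[
\theta(J') - \theta(J) \;=\; \alpha_1 \nu(Y) - \alpha_2 \nu(J(2) \cap N(Y)),
\]
which, by (\ref{eqn-alpha}), is nonnegative exactly when $(\chi_f(G) - 1)\nu(Y) \ge \nu(J(2) \cap N(Y))$. Establishing this inequality is the main obstacle of the argument. The relevant inputs are that $J(2) \cap N(Y)$ lies inside $V(G) - N(J(1))$ by independence of $J$, so its $\nu$-weight is constrained by the fractional clique structure of $G$; combined with the criticality of $G$ (which, through LP duality, yields $\sum_{I \ni v} f(I) = 1$ for every $v$), and a suitable averaging over $y \in Y$, one expects the required bound. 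Should the minimal one-step modification fail to close the gap, the natural fallback is the canonical alternating extension $J^*$ defined by $J^*(i) = J(0)$ for even $i$ and $J^*(i) = V(G) - N(J(0))$ for odd $i$, whose independence is transparent (since $J(0)$ is $G$-independent) and whose global weight comparison telescopes via the geometric decay $\alpha_i = \alpha_{i+1}(\chi_f(G)-1)$. In every scenario the resulting $J'$ either strictly improves $\theta - \beta$ (contradicting (1)), or preserves it while strictly increasing $\nu(J(0)) + \nu(J(1))$ via $\nu(J'(1)) > \nu(J(1))$ (contradicting (2)), or attains level $\ge 1$ by construction (contradicting (3), since $i_0 = 0$). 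In any case we reach the desired contradiction, so $i_0 \ge 1$.
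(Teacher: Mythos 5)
The proposal has a genuine gap. You correctly identify that the one-step modification of pushing $Y = (V(G)-N(J(0)))-J(1)$ into layer 1 and deleting $J(2)\cap N(Y)$ from layer 2 hinges on the inequality
\[
(\chi_f(G)-1)\,\nu(Y) \;\ge\; \nu\bigl(J(2)\cap N(Y)\bigr),
\]
and you acknowledge that this is ``the main obstacle.'' But you do not prove it, and the suggested route does not obviously exist: Observation~\ref{obs-1}, the only tool the paper has for producing such inequalities, requires exhibiting a subgraph in which the ``small'' side is a \emph{maximum-weight independent set}, and $Y$ is in general not even independent in $G$ (it is the complement of a neighbourhood minus $J(1)$). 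The appeal to LP duality, criticality, and ``averaging over $y\in Y$'' is not an argument, and I do not see how to turn it into one. Your fallback --- replacing $J$ wholesale by the alternating pattern $J^*$ --- is also unjustified: the layers of $J$ above layer~2 may carry strictly more $\theta$-weight than those of $J^*$ (whose even layers are only $J(0)$), so the claimed telescoping does not yield $\theta(J^*)-\beta(J^*)\ge\theta(J)-\beta(J)$.

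The paper avoids your inequality entirely by first establishing structure. It sets $A=J(0)-J(1)$, $B=J(2)-J(1)-J(0)$, and shows $A$ is a maximum-weight independent set of $G[A\cup B]$ (otherwise one swaps $A$ for a heavier independent set in layer~0, improving condition~(2)). Observation~\ref{obs-1} applied to $G[A\cup B]$ then gives $(\chi_f(G)-1)\nu(A)\ge\nu(B)$, which licenses moving $A$ to layer~1 while deleting $B$ from layer~2, forcing $A=\emptyset$, hence $B=\emptyset$; a further step shows $C=J(2)-J(0)=\emptyset$. Only after deducing $J(2)\subseteq J(0)\subseteq J(1)$ does the paper fill layer~1 with all of $V(G)-N(J(0))$ --- and at that point \emph{no deletion from layer~2 is needed}, precisely because $J(2)\subseteq J(0)$ is nonadjacent to $V(G)-N(J(0))$. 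In short, the containment $J(2)\subseteq J(0)$ is what makes the final step free, and you have not found a substitute for it. To repair the argument you would need either to prove your inequality or, more in the spirit of the paper, to first show $J(2)\subseteq J(0)\subseteq J(1)$ before enlarging layer~1.
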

	\begin{proof}
		Let $A=J(0)-J(1)$, $B=J(2)-J(1)-J(0)$ and $H=G[A\cup B]$. 
		
		First we show that $A$ is an independent set of $H$ with maximum weight.
		Assume to the contrary that $N\in \mathcal{I}(H)$ is an independent set such that $\nu(N)>\nu(A)$. Let
		$$\delta = \nu(N)-\nu(A),  \ J'=(J-A\times \{0\} )\cup (N\times \{0\}).$$
		Since for any vertex $u$ in $A\cup B$ and for any vertex $v$ in $J(1)$, $uv\notin E(G)$, $J'$ is an independent set of $\Delta_n(G)$. As $\alpha_0-\alpha_{n-1}\ge 0$,  we have
		\begin{eqnarray*}
			\theta(J')- \beta(J')&\ge& \theta(J)+(\alpha_0-(1-\frac{1}{\chi_f(H)})\alpha_{n-1})\delta   -(\beta(J)+\frac{1}{\chi_f(H)}\alpha_{n-1}\delta ) \\
			&=& \theta(J)-\beta(J)+(\alpha_0-\alpha_{n-1})\delta  \\
			&\ge&\theta(J)-\beta(J).
		\end{eqnarray*}
		
		Since $\nu(J'(0))+\nu(J'(1))> \nu(J(0))+\nu(J(1))$, this is in contrary to the choice of $J$.
		
		If $A\neq \emptyset,$  then  $\nu(A) > 0$,  and by Observation \ref{obs-1} and Claim \ref{clm-1},    $\chi_f(G)\ge \chi_f(H)\ge \frac{\nu(A)+\nu(B)}{\nu(A)}$. Since $\alpha_{1}+\alpha_{2}=\alpha_{2}\chi_f(G)$, we have
		$ \alpha_1\nu(A)\ge \alpha_{2}\nu(B).$
		Let $$J'=(J-B\times \{2\} )\cup (A\times \{1\}).$$ Since there is no edge between $A$ and $J(0)\cup J(1)$, and $J(2)-B\subseteq J(0)\cup J(1)$,   we know that $J'$ is an independent set of $ \Delta_{n}(G)$ with $\theta(J')- \beta(J')\ge \theta(J)- \beta(J)$ and $\nu(J'(0))+\nu(J'(1))> \nu(J(0))+\nu(J(1))$, a contradiction.  Hence, $A=\emptyset$ which implies that $B=\emptyset$, and hence $J(0) \subseteq J(1)$ and $J(2) \subseteq J(1)$.

		Let $C=J(2)-J(0)$. If $C \ne \emptyset$, then let
		$$J'=J\cup (C \times\{0\}).$$
		Since $ J(2)\cap J(1)$ is an independent set and $C \subseteq J(2) \cap J(1)$, $J'$ is an independent set. We have
		\begin{eqnarray*}
			\theta(J')- \beta(J')&\ge& \theta(J)+(\alpha_0-(1-\frac{1}{\chi_f(H)})\alpha_{n-1})\nu(C)  -(\beta(J)+\frac{1}{\chi_f(H)}\alpha_{n-1}\nu(C) )\\
			&=& \theta(J)-\beta(J)+(\alpha_0-\alpha_{n-1})\nu(C) \\
			&\ge&\theta(J)-\beta(J).
		\end{eqnarray*}
		Since $\nu(J'(0))+\nu(J'(1))> \nu(J(0))+\nu(J(1))$, this is contrary to the choice of $J$. Hence  $A=B=C=\emptyset$ and $J(2)\subseteq J(0) \subseteq J(1).$
		Since $J$ is an independent set of $\Delta_n(G)$, we know that $J(1) \subseteq V(G)-N(J(0))$. Since $J(2) \subseteq J(0)$, $J \cup (V(G)-N(J(0))) \times \{1\}$ is an independent set of $\Delta_n(G)$. By the maximality of $J$, 
		we have $J(1)=V(G)-N(J(0))$, and hence $i_0\ge 1$. 
	\end{proof}
	\begin{claim}
		\label{claim}   $i_0\ge n -2$.  
	\end{claim}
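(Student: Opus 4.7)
The plan is to argue by contradiction: suppose $i_0 \le n-3$, and build a modification $J'$ of $J$ that contradicts one of the three maximality conditions $(1), (2), (3)$ defining $J$. Writing $W := V(G) - N(J(0))$, the failure of the level pattern at $i_0+1$ means that the target value of $J(i_0+1)$, call it $T$, is $J(0)$ when $i_0$ is odd and $W$ when $i_0$ is even, and $J(i_0+1) \ne T$. As a preliminary, $J(i_0+1) \subseteq W$ in both parity cases: when $i_0$ is even this is immediate from $J(i_0+1)$ having no edges to $J(i_0) = J(0)$; when $i_0$ is odd, $J(i_0+1)$ is non-adjacent to $J(i_0) = W$, so $J(i_0+1) \subseteq V(G) - N(W) \subseteq W$, the second inclusion being a consequence of $J(0) \subseteq W$.

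In the odd case, the next step is to show $J(i_0+1) \subseteq J(0)$ by mimicking the ``add $C$ to layer $0$'' step from the proof of Claim \ref{clm-1}, shifted upward. Set $C := J(i_0+1) - J(0)$. Since $C \subseteq J(i_0) \cap J(i_0+1)$ and these are adjacent layers of $J$, an edge inside $C$ would create an edge of $\Delta_n(G)$ between layers $i_0$ and $i_0+1$, contradicting independence of $J$; hence $C$ is independent in $G$. Because $C \subseteq V(G) - N(W)$, $C$ also has no edges to $J(0)$ or to $J(1) = W$, so $J' := J \cup (C \times \{0\})$ is independent in $\Delta_{H,n}(G)$. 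The weight computation is structurally identical to the corresponding step in Claim \ref{clm-1}: using $\alpha_0 \ge \alpha_{n-1}$, one obtains $\theta(J') - \beta(J') \ge \theta(J) - \beta(J)$ with strict improvement in $\nu(J'(0)) + \nu(J'(1))$ whenever $\nu(C) > 0$, contradicting $(2)$. Thus $C = \emptyset$ and $J(i_0+1) \subseteq J(0)$.

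The main obstacle is the reverse inclusion $T \subseteq J(i_0+1)$, needed in both parity cases. Naively adjoining the missing vertices $D := T - J(i_0+1)$ to layer $i_0+1$ may introduce new edges to layer $i_0+2$, so the modification must be coupled with a compensating change of layer $i_0+2$, and, if necessary, propagated upward. My approach is a layer-swap between layers $i_0+1$ and $i_0+2$ in the spirit of the $A$--$B$ step of Claim \ref{clm-1}, balanced by the identity $\alpha_{i_0+1} + \alpha_{i_0+2} = \alpha_{i_0+2}\chi_f(G)$ from (\ref{eqn-alpha}) together with Observation \ref{obs-1}. A separate subcase must be handled when $i_0+1 = n-1$, where the adjacency with $\star$ forces case analysis on whether $\star \in J$, reconciled via the two branches $\star \in J$ and $\star \notin J$ in the definition of $\beta(J)$ and the identity (\ref{eqn-sum}). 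Once $J(i_0+1) = T$ is achieved, the resulting $J'$ has the same value of $\theta - \beta$ and the same $\nu(J(0)) + \nu(J(1))$ as $J$ but a strictly larger level, contradicting $(3)$ and completing the proof.
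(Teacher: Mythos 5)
Your preliminary analysis is sound and actually fills in a step the paper glosses over: you correctly observe $J(i_0+1)\subseteq W$ in both parity cases, and in the odd case you give an explicit ``add $C$ to layer $0$'' argument (contradicting condition $(2)$) to conclude $J(i_0+1)\subseteq J(0)$; the paper simply asserts $J(i_0+1)\subseteq J(0)=J(i_0-1)$ without spelling this out. So far so good, and it matches the start of the paper's argument once one sets $A := J(i_0-1)-J(i_0+1)$ (your $D$) and $B := J(i_0+2)-J(i_0)$.

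The gap is in the main step. You propose to push the pattern up by swapping $B$ out of layer $i_0+2$ and $A$ into layer $i_0+1$, ``balanced by the identity $\alpha_{i_0+1}+\alpha_{i_0+2}=\alpha_{i_0+2}\chi_f(G)$ together with Observation~\ref{obs-1},'' with an escape hatch of propagating the repair further upward. This does not go through. To make that swap nondecreasing in $\theta-\beta$ you need $\alpha_{i_0+1}\nu(A)\ge\alpha_{i_0+2}\nu(B)$, i.e.\ $\nu(B)\le(\chi_f(G)-1)\nu(A)$. In Claim~\ref{clm-1} the analogous bound came from showing $A$ is a \emph{maximum-weight} independent set of $G[A\cup B]$, which was possible because $A$ lived in layer $0$ and condition $(2)$ is a maximality statement about layers $0,1$. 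Here $A$ lives in layer $i_0-1$, and the three optimality conditions give you nothing about maximality of weight at that layer, so Observation~\ref{obs-1} cannot be invoked. Propagating upward does not help: each further swap faces the same sign uncertainty, and in fact in the regime $\alpha_{i_0+1}\nu(A)<\alpha_{i_0+2}\nu(B)$ every upward swap strictly decreases $\theta-\beta$, which is not a contradiction to anything. What you are missing is the \emph{dichotomy} the paper uses: if $\alpha_{i_0+1}\nu(A)\ge\alpha_{i_0+2}\nu(B)$ do the upward swap and contradict $(3)$; otherwise, use the constant ratio $\alpha_{k+1}/\alpha_k=1/(\chi_f(G)-1)$ to transfer the strict inequality to the lower pair of indices, getting $\alpha_{i_0}\nu(B)>\alpha_{i_0-1}\nu(A)$, and perform the \emph{downward} swap $J'=(J-A\times\{i_0-1\})\cup(B\times\{i_0\})$, which strictly increases $\theta-\beta$ and contradicts $(1)$ (with a separate computation for $i_0=1$ because layer $0$ is then touched and the $\beta$ term changes). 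Also note you never rule out $B=\emptyset$, in which case simply inserting $A$ into layer $i_0+1$ already strictly increases $\theta-\beta$; and your worry about the subcase $i_0+1=n-1$ is moot since the hypothesis $i_0\le n-3$ keeps all affected layers strictly below $n-1$.
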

	\begin{proof}
		Assume $i_0 \le n-3$. 
		If $i_0$ is even, then $J(i_0)=J(0)$ and $J(i_0+1)\subseteq V(G)-N(J(0))=J(i_0-1)$. If $i_0$ is odd, then $ J(i_0)=J(1)=V(G)-N(J(0))$ and $J(i_0+1)\subseteq J(0)=J(i_0-1)$. So in any case, $J(i_0+1)\subseteq J(i_0-1)$. 
		By the maximality of $i_0$, we know that $A=J(i_0-1)-J(i_0+1)\neq \emptyset$. Hence $B=J(i_0+2)-J(i_0)\neq \emptyset$ (otherwise, by replacing
		$J(i_0+1) \times \{i_0+1\}$ with $J(i_0-1) \times \{i_0+1\}$, 
		we  obtain an independent set $J'$ with $\theta(J')- \beta(J')>\theta(J)- \beta(J)$).
		Let $$J'=(J-B\times\{i_0+2\})\cup (A\times\{i_0+1\}).$$ We have
		$$  \theta(J')- \beta(J')\ge \theta(J)+\alpha_{i_0+1}\nu(A)-\alpha_{i_0+2}\nu(B)-\beta(J).$$
		If $\alpha_{i_0+1}\nu(A)\ge \alpha_{i_0+2}\nu(B)$, then $J'$ is an independent set with $\theta(J')-\beta(J')\ge \theta(J)-\beta(J)$ and $\nu(J'(0))+\nu(J'(1))=\nu(J(0))+\nu(J(1))$ but the level of $J'$ is greater than $i_0$. This is in contrary to the choice of $J$.
		Hence  $\alpha_{i_0+2}\nu (B)> \alpha_{i_0+1}\nu (A).$ By Equality (\ref{eqn-alpha}), for $k=0,1,\ldots, n-2$,
		$$\alpha_{k+1}\nu (B)> \alpha_{k}\nu (A).$$ 
		Let $$J'=(J-A\times\{i_0-1\})\cup (B\times\{i_0\}).$$ Then $J'$ is an independent set. If $i_0=1$, then
		\begin{eqnarray*}
			\theta(J')- \beta(J')&\ge & \theta(J)+\alpha_{1}\nu(B)\\
			&-&(\alpha_0-(1-\frac{1}{\chi_f(H)})\alpha_{n-1})\nu(A)  -(\beta(J)+\frac{1}{\chi_f(H)}\alpha_{n-1}\nu(A) )\\
			&=&\theta(J)-\beta(J)+\alpha_{1}\nu(B) -\alpha_0\nu(A)+(1-\frac{2}{\chi_f(H)})\alpha_{n-1}\nu(A) \\
			&\ge&\theta(J)-\beta(J)+\alpha_{1}\nu(B) -\alpha_0\nu(A)  \text{\ (as $\chi_f(H)\ge 2$)}\\
			&>&\theta(J)-\beta(J)
		\end{eqnarray*}
		If $i_0\ge 2$, then $
		\theta(J')- \beta(J')= \theta(J)+\alpha_{i_0}\nu(B)-\alpha_{i_0-1}\nu(A)-\beta(J)>\theta(J)-\beta(J)$,   a contradiction.
		This completes the proof of Claim \ref{claim}.
	\end{proof}
	Since $J(n-2)=J(1)=V(G)-N(J(0))$, we know that $J(n-1)\subseteq J(0).$ If $(\star,v)\in J$, then
	\begin{eqnarray*}
		J=\left(J(0)\times\{0,2,\ldots,n-3\}\right) \cup \left((V(G)-N(J(0)))\times\{1,3,\ldots,n-2\} \right)\cup(\star,n).
	\end{eqnarray*}
	If $(\star,v)\notin J$, then
	\begin{eqnarray*}
		J=\left(J(0)\times\{0,2,\ldots,n-1\}\right) \cup \left((V(G)-N(J(0)))\times\{1,3,\ldots,n-2\} \right).
	\end{eqnarray*}
	Assume $A=J(0)$ and $B=J(1)-J(0)$. Let $H=G[B]$ and $N\in \mathcal{I}(H)$ be an independent set of $H$ with maximal weight. Then $$\chi_f(G)\ge \chi_f(H)\ge \frac{\nu(B)}{\nu(N)}.$$
	Since $A\cup N$ is an independent set of $G$ and $A\cap N=\emptyset$ which implies that $\nu(A)+\nu(N)\le 1$, we have
	\begin{eqnarray*}
		\alpha_{k+1}\nu(N(J(0)))-\alpha_k\nu(J(0))&=& 
		\alpha_{k+1}\nu(V(G)-A-B)-\alpha_k\nu(A)\\
		&=&\alpha_k+\alpha_{k+1}-(\alpha_k+\alpha_{k+1})\nu(A)-\alpha_{k+1}\nu(B)\\
		&\ge&\alpha_k+\alpha_{k+1}-(\alpha_k+\alpha_{k+1})\nu(A)-\alpha_{k+1}\chi_f(G)\nu(N)\\
		&=&\alpha_k+\alpha_{k+1}-(\alpha_k+\alpha_{k+1})(\nu(A)+\nu(N))\\
		&\ge& 0.
	\end{eqnarray*}
	Hence, for any $0\le k\le n-2$,
	$$\alpha_{k+1}\nu(N(J(0)))\ge \alpha_k\nu(J(0)).$$
	If $\star \in J$, then 
	\begin{eqnarray*}
		\theta_v(J)
		&=&\left(  \alpha_0+\alpha_2+\ldots+\alpha_{n-3}-(1-\frac{1}{\chi_f(H)})\alpha_{n-1}  \right)  \nu(J(0))\\
		&+&(\alpha_1+\alpha_3+\ldots+\alpha_{n-2})\nu(V(G)-N(J(0)))+\alpha_{n-1}\\
		&\le& (\alpha_1+\alpha_3+\ldots+\alpha_{n-2})\nu(V(G))+\alpha_{n-1}-(1-\frac{1}{\chi_f(H)})\alpha_{n-1}\nu(J(0))\\
		&=&(\alpha_0+\alpha_1+\ldots+\alpha_{n-1})-(1-\frac{1}{\chi_f(H)})\alpha_{n-1}\nu(J(0)).
	\end{eqnarray*}
	If $\star \notin J$, then 
	\begin{eqnarray*}
		\theta_v(J)
		&=&(\alpha_0+\alpha_2+\ldots+\alpha_{n-1})\nu(J(0))\\
		&+&(\alpha_1+\alpha_3+\ldots+\alpha_{n-2})\nu(V(G)-N(J(0)))-(1-\frac{1}{\chi_f(H)})\alpha_{n-1}\nu(J(0))\\
		&\le& (\alpha_1+\alpha_3+\ldots+\alpha_{n-2})\nu(V(G))+\frac{1}{\chi_f(H)}\alpha_{n-1}\nu(J(0))\\
		&=&(\alpha_0+\alpha_1+\ldots+\alpha_{n-2})+\frac{1}{\chi_f(H)}\alpha_{n-1}\nu(J(0)).
	\end{eqnarray*}	
	This completes the proof of Lemma \ref{weight1}, and hence
	$\chi_f(\Delta_{H,n}(G)) \ge \chi_f(G) + \chi_f(H) \tau'(G,n,H).$ 

	\section{Fractional colouring}
	
	\subsection{$n$ is even}
	
	This section proves that if $n$ is even, and $\chi_f(H) \le \chi_f(G)$, then 
	$$\chi_f(\Delta_{H,n}(G)) = \chi_f(\Delta_n(G)) = \chi_f(G) + \tau,$$
	where $\tau=\tau(G,n)=\frac{1}{\sum_{k=0}^{n-1}(\chi_f(G)-1)^k}$.
	We already know that 
	$\chi_f(\Delta_{H,n}(G)) \ge \chi_f(\Delta_n(G)) = \chi_f(G) + \tau(G,n).$ 
	It remains  to prove that  $\chi_f(\Delta_{H,n}(G)) \le  \chi_f(G) + \tau(G,n).$ 
	
	For positive integers $s,t$, let $K(s,t)$ be the {\em Kneser} graph, whose vertices are $t$-subsets of $[s]=\{1,2,\ldots, s\}$, and two vertices $A,B$ are adjacent if $A \cap B = \emptyset$ (as subsets of $[s]$). It is well-known (and easy to see) that $\chi_f(H) \le s/t$ if and only if $H \to K(sm,tm)$ for some integer $m$.
	
	It is obvious that if $H \to H'$, then
	$ \Delta_{H,n}(G) \to  \Delta_{H', n}(G)$. 
	Hence 
	$\chi_f(\Delta_{H,n}(G))  \le \chi_f(\Delta_{H', n}(G))$.
	Assume $\chi_f(G) = \frac st$. As $\chi_f(H) \le \frac st$, we may assume that $H \to K(s,t)$ (with appropriate choice of $s$ and $t$, note that $s$ and $t$ need not be coprime). 
	Therefore, 
	it suffices to show that for $H=K(s,t)$,
	$ \Delta_{H,n}(G)$ has a fractional  colouring $\mu': \mathcal{I}(\Delta_{H,n}(G)) \to [0,1]$ of weight $  \chi_f(G) + \tau$.

	For $j \in [s]$, let $$T_j = \{v \in V(H): j \in v\}, \text{and } \overline{T_j} = V(H)-T_j.$$	
	Assume  $I\in \mathcal{I}(G)$ and $ j \in [s]$. If $k \in \{0,2,4, \ldots, n-2\}$ is even,  then let 
	\begin{eqnarray*}
		I_{k,j} &=& \left(I\times\{0,1,\ldots,k\} \times T_j \right)
		\cup \left(V(G)\times\{k+2,k+4,\ldots,n-2\} \times T_j \right)\cup (\{\star\} \times T_j) \\
		&\cup& \left(I\times\{0,1,\ldots,k+1\} \times \overline{T_j}\right)\cup \left(V(G)\times\{k+3,k+5,\ldots,n-1\} \times \overline{T_j} \right).
	\end{eqnarray*}
	
	Let 
	$$O=(V(G)\times\{ 1, 3,\ldots, (n-1)\}) \times V(H).$$

	For $i=0,2,\ldots, n-2$, let
	$$\sigma_i =\frac 1t \tau(\chi_f(G)-1)^i.$$
	Note that for $0\le k\le n-2$, $\sigma_k+\sigma_{k+1}=\sigma_k\chi_f(G)$.	Therefore
	
	\begin{equation}
	\label{eqn-simga}\sigma_0+\sigma_2+\ldots+\sigma_{n-2}= (\sum_{i=0}^{n-1} \sigma_i) /\chi_f(G)  = \frac{1}{s}.
	\end{equation}

	Let $\mu:\mathcal{I}(G)\longrightarrow [0,1]$ be a fractional colouring of $G$ of weight $\chi_f(G)$.
	
	We define 
	$\mu':\mathcal{I}(\Delta_{H,n}(G))\longrightarrow [0,1]$ by 
	\begin{equation*}
	\mu'(J)=\left\{
	\begin{array}{rcl}
	\sigma_k\mu(I)& & \text{if $J=I_{k,j}, I\in \mathcal{I}(G), j \in [s], 0\leqslant k\equiv 0 \pmod{2}\leqslant n-2$}, \\
	\tau& & \text{if $J=O$},\\
	0 & & \text{otherwise}.
	\end{array} \right.
	\end{equation*}

	\begin{figure}[ht]	
		\centering
		\includegraphics[width=6in]{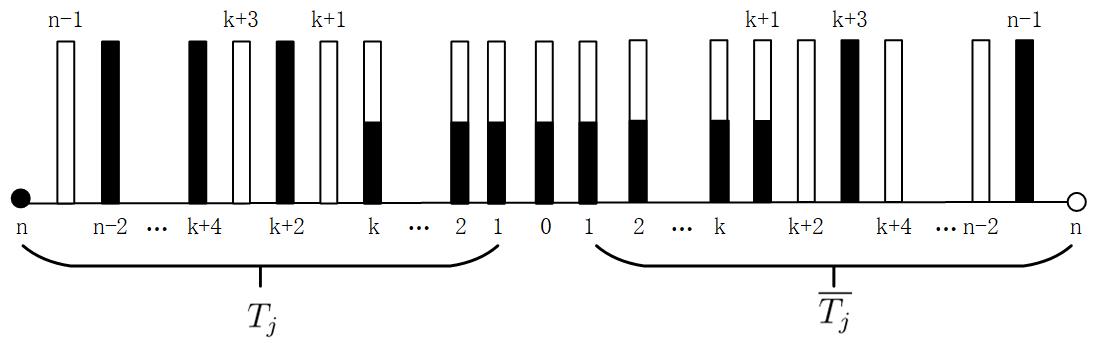}
		\caption{The independent set $I_{k,j}$ with weight $\sigma_k\mu(I)$. In this figure as well as the later figures, if the $i$th column is a filled rectangle, then the whole layer $V(G) \times \{i\}$ is   contained in the independent set $I_{k,j}$, if the $i$th column is a half-filled rectangle, then   $I \times \{i\}$ is   contained in the independent set $I_{k,j}$. Otherwise the layer is disjoint from $I_{k,j}$. The lefthand side represents $I_{k,j} \cap \Delta_n(G,v)$ for  each vertex $v \in T_j$, and the righthand side represents $I_{k,j} \cap \Delta_n(G,v)$ for  each vertex $v \in \overline{T_j}$. }
		\label{fig1}
	\end{figure}
	\begin{figure}[ht]
		
		\centering
		\includegraphics[width=4in]{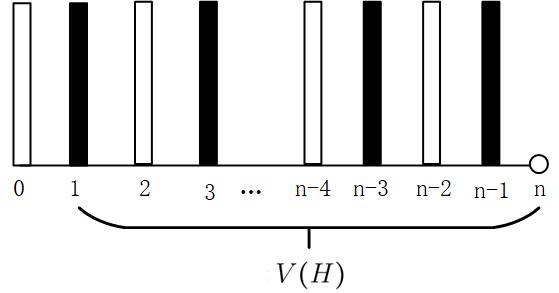}
		\caption{the independent set $O$ with weight $\tau$.}
		\label{fig2}
	\end{figure}
	
	We shall show that $\mu'$ is a fractional colouring of $\Delta_{H,n}(G)$ of weight $\chi_f(G)+ \tau$.
	
	First we calculate the total weight
	$$\sum_{J \in \mathcal{I}(\Delta_{H,n}(G))}\mu'(J) = \sum_{I \in \mathcal{I}(\Delta_n(G)), 0 \le k\equiv 0\pmod{2}\le n-2, j \in [s]}\mu'(I_{k,j})  + \mu'(O).$$
	For   $I \in \mathcal{I}(G)$, it follows from (\ref{eqn-simga}) that 
	$$
	\sum_{0 \le k\equiv 0\pmod{2}\le n-2, j \in [s]}\mu'(I_{k,j}) = s \times \mu(I) \times \sum_{0 \le k\equiv 0\pmod{2}\le n-2 } \sigma_k = \mu(I).$$
	By definition,  $\mu'(O)= \tau$,	
	we have	
	$$\sum_{J\in \mathcal{I}(\Delta_{H,n}(G))}\mu'(J)=\sum_{I \in \mathcal{I}(G)} \mu(I) + \tau = \chi_f(G)+\tau.$$

	It  remains to verify that $\sum_{u\in J}\mu'(J)\ge 1$ for every vertex $u$ of $\Delta_{H,n}(G)$.

	For $u \in V(\Delta_{H,n}(G))$, let $$K(u) = \{(k,j): u \in I_{k,j}\}.$$
	
	\bigskip
	\noindent 	
	{\bf Case 1} $u=(\star, v)$.\\
	
	In this case,  $$K(u) = \{(k,j): 0 \le k \le n-2, k \equiv 0 \pmod{2}, j \in v\}.$$  
	Hence
	\begin{eqnarray*}
		\sum_{u\in J}\mu'(J)&=&\sum_{I\in \mathcal{I}(G), j \in v}t \times [(\sigma_0+\sigma_2+\ldots+\sigma_{n-2})]\mu(I)\\
		&=&t \times[(\sigma_0+\sigma_2+\ldots+\sigma_{n-2})]\chi_f(G)\\
		&=&\frac ts\chi_f(G)=1.\\
	\end{eqnarray*}
	\bigskip
	\noindent
	{\bf Case 2} $u=((x,0),v)$. \\
	Then $$K(u) = \{ (k,j): 0 \le k \le n-2, k \equiv 0 \pmod{2},x \in I , j \in [s] \}.$$ 
	Therefore
	\begin{eqnarray*}
		\sum_{u\in J}\mu'(J)&=&\sum_{x\in I, j \in [s] , I\in \mathcal{I}(G)}[\sigma_0+\sigma_2+\ldots+\sigma_{n-2}]\mu(I)\\
		&=&s(\sigma_0+\sigma_2+\ldots+\sigma_{n-2})=1.
	\end{eqnarray*}
	
	\bigskip
	\noindent
	{\bf Case 3} $u=((x, i),v)$, $i > 0$.\\
	\begin{enumerate}
		\item If $i$ is even, then 
		\begin{eqnarray*}
			K(u) &=& \{(k,j): 0\le k \le i-2, k \equiv 0 \pmod{2}, j \in v\} \\
			&\bigcup& \{ (k,j): k=i, i+2, \ldots, n-2,  x \in I, j \in [s]\}. 
		\end{eqnarray*}
		Hence
		\begin{eqnarray*}
			\sum_{u\in J}\mu'(J)&=&\sum_{I\in \mathcal{I}(G), j \in v}(\sigma_0+\sigma_2+\ldots+\sigma_{i-2})\mu(I)\\
			&+&\sum_{x \in I, I\in \mathcal{I}(G), j \in [s]} (\sigma_i+\sigma_{i+2} + \ldots+\sigma_{n-2})\mu(I)\\
			&=&t \times (\sigma_0+\ldots+\sigma_{i-1})+s \times (\sigma_i+\sigma_{i+2} + \ldots+\sigma_{n-2}) \\
			&=&t\times (\sigma_0+\sigma_2+\ldots+\sigma_{i-2})+t\times (\sigma_1+\sigma_3+\ldots+\sigma_{i-1})+s \times (\sigma_i+\sigma_{i+2} + \ldots+\sigma_{n-2})\\
			&=&t\times (\sigma_0+\sigma_2+\ldots+\sigma_{i-2})+(s-t)\times (\sigma_0+\sigma_2+\ldots+\sigma_{i-2})+s \times (\sigma_i+\sigma_{i+2} + \ldots+\sigma_{n-2})\\
			&=&s\times (\sigma_0+\sigma_2+\ldots+\sigma_{n-2})\\
			&=&1.
		\end{eqnarray*}

		\item If $i$ is odd, then 
		\begin{eqnarray*}
			K(u) &=& \{(k,j): 0 \le k \le i-3, k \equiv 0 \pmod{2}, j \notin v\} \\
			&\bigcup& \{(k,j): k=i-1, j \notin v, x \in I\} \cup  \{ (k,j): k=i+1,\ldots, n-2, x \in I, j \in [s]\}.
		\end{eqnarray*}
	\end{enumerate}
	Moreover,  $u \in O$. 
	Hence 
	\begin{eqnarray*}
		\sum_{u\in J}\mu'(J)&=&\sum_{I\in \mathcal{I}(G), j \notin v}(\sigma_0+\sigma_2+\ldots+\sigma_{i-3})\mu(I)
		+\sum_{x \in I, I \in \mathcal{I}(G), j \notin v}\sigma_{i-1}\mu(I) \\
		&& +\sum_{x\in I, I\in \mathcal{I}(G), j \in [s]}(\sigma_{i+1}+ \sigma_{i+3}+ \ldots+ \sigma_{n-2})\mu(I) 
		+\tau\\
		&=&(s-t) \times (\sigma_0+\sigma_1+\ldots+\sigma_{i-1}) +s \times (\sigma_{i+1}+\sigma_{i+3}+\ldots+\sigma_{n-2}) +\tau\\
		&=&s \times (\sigma_0+\sigma_2+\ldots+\sigma_{n-1})\\
		&=&1,
	\end{eqnarray*}
	where we used the equality
	\begin{eqnarray*}
		t\times (\sigma_0+\sigma_2+\ldots+\sigma_{i-1})&=&t\times \sigma_0+t\times (\sigma_2+\sigma_4+\ldots+\sigma_{i-1})\\
		&=& \tau+ (s-t)\times (\sigma_1+\sigma_3+\ldots+\sigma_{i-2}).
	\end{eqnarray*}
	that is
	\begin{eqnarray*}
		s\times (\sigma_0+\sigma_2+\ldots+\sigma_{i-1})&=&(s-t)\times (\sigma_0+\sigma_2+\ldots+\sigma_{i-1})+\tau+ (s-t)\times (\sigma_1+\sigma_3+\ldots+\sigma_{i-2})\\
		&=&\tau+(s-t)\times (\sigma_0+\sigma_1+\ldots+\sigma_{i-1}).
	\end{eqnarray*}

	This completes the proof of  Theorem \ref{thm-main} for even $n$.

\subsection {$n$ is odd}

 Assume $\chi_f(H) = \frac st \le \chi_f(G)$. Similarly, it suffices to consider the case that $H=K(s,t)$, where $s \ge 2t$.

 Recall that $$	\tau'(G,n,H) =\frac{1}{\chi_f(H)     (\sum_{k=1}^{n-1}(\chi_f(G)-1)^k) +1}.$$
 
 Similarly, for $j\in [s]$, let $T_j=\{v\in V(H): j\in v\}$ and $\overline{T_j}=V(H)-T_j$.
 
 For $I\in \mathcal{I}(G)$, $k\in \{1,3,\ldots,n-2\}$ is odd  and $j\in[s]$, we define $I_{k,j}\in \mathcal{I}(\Delta_{H,n}(G))$ as follows:

 \begin{eqnarray*}
 	I_{k,j} &=&
 	\left(I\times\{1,\ldots,k+1\} \times \overline{T_j} \right)\cup \left(V(G)\times\{k+3,k+5,\ldots,n-1\}\times \overline{T_j} \right)\\
 	&\cup&  \left(I\times\{0,1,\ldots,k\} \times T_j \right)
 	\cup \left(V(G)\times\{k+2,k+4,\ldots,n-2\}  \times T_j \right)  \cup \{(\star,v):  v\in T_j\}. \\
 \end{eqnarray*}

  For $I\in \mathcal{I}(G)$, and $k\in \{0,2,\ldots,n-1\}$ is even, we define $I_{k}\in \mathcal{I}(\Delta_{H,n}(G))$ as follows:
 $$I_{k}=\left(  I\times \{0,1,2\ldots,k\}\times V(H)  \right) \cup \left( V(G)\times \{k+2, k+4, \ldots, n-1\}\times V(H) \right).$$
 For $j\in [s]$, let
 $$O_j=\left(V(G)\times \{1,3,\ldots, n-2\}\times V(H) \right) \cup \{(\star,  v): v\in T_j\}.$$

 For $i=0,1,\ldots, n-1$, let 
 $$\sigma'_i=\frac{\chi_f(H)}{\chi_f(G)}\frac{\tau'}{t} (\chi_f(G)-1)^i.$$
 Let 
 \[
 \delta_i= \begin{cases} s\sigma'_0+ \frac{\sigma'_0(t\chi_f(G)-s)}{\chi_f(G)-2}\left( (\chi_f(G)-1)-1\right), &\text{ if $i=0$},\cr 
 \frac{\sigma'_0(t\chi_f(G)-s)}{\chi_f(G)-2}\left( (\chi_f(G)-1)^{i+1}-(\chi_f(G)-1)^{i-1} \right), &\text{ if $i=2,4,\ldots, n-3$}, \cr
 \frac{\chi_f(G)-\chi_f(H)}{\chi_f(G)}- \frac{\sigma'_0(t\chi_f(G)-s)}{\chi_f(G)-2}\left( (\chi_f(G)-1)^{n-2}-1 \right)- (s-t)\sigma'_0, & \text{ if $i=n-1$}.
 \end{cases}
 \]
 

Note that for $0\le k\le n-2$, $\sigma'_k+\sigma'_{k+1}=\sigma'_k\chi_f(G)$.

\begin{lemma}
	\label{lem-equlaities}
	The following equalities hold:
	\begin{equation}
	\label{eqn-sigma}
s(\sigma'_1+\sigma'_2+\ldots+\sigma'_{n-1})=\frac{\chi_f(H)}{\chi_f(G)}-t\sigma'_0,
	\end{equation}
	\begin{equation}
	\label{eqn-delta}
	\delta_0+\delta_2+\ldots+\delta_{n-3}+\delta_{n-1}=t\sigma'_0+  \frac{\chi_f(G)-\chi_f(H)}{\chi_f(G)}.
	\end{equation} 
	 \begin{equation}
	 \label{eqnsigmadelta}
	  s(\sigma'_1+\sigma'_2+\ldots+\sigma'_{n-1})+\delta_0+\delta_2+\ldots+\delta_{n-3}+\delta_{n-1} = 1.
	 \end{equation}
	 For any even integer $2 \le i \le n-1$, 
	 \begin{equation}
	 \label{eqn-complex}
	 \begin{aligned}
	 &(s-t)(\sigma'_1+\sigma'_2+\ldots + \sigma'_{i-1})\chi_f(G) + (\delta_0+\delta_2+\ldots + \delta_{i-2})\chi_f(G) \\
	 &= s(\sigma'_1+\sigma'_2+\ldots + \sigma'_{i})+(\delta_0+\delta_2+ \ldots + \delta_{i-2}).
	 \end{aligned}
	 \end{equation}
	  For any odd integer $1 \le i \le n-2$, 
	  \begin{equation}
	  \label{eqn8}
	    t (\sigma'_0+\sigma'_1+ \ldots + \sigma'_{i-1})\chi_f(G) - s  ( \sigma'_1+ \ldots + \sigma'_{i-1}) = (\delta_0+\delta_2+\ldots + \delta_{i-1}).
	  \end{equation}
\end{lemma}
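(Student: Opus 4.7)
Let $c = \chi_f(G)$ and $h = \chi_f(H) = s/t$. The whole lemma is algebraic and reduces to manipulations of geometric series in $c-1$, driven by three preliminary simplifications that I would establish first. First, the recurrence $\sigma'_k + \sigma'_{k+1} = c\sigma'_k$ noted just above the lemma gives the closed form $\sigma'_k = \sigma'_0(c-1)^k$, so every $\sigma'$-sum becomes a geometric series. Second, the definition of $\tau'$ rearranges to the convenient form $h\tau'\sum_{k=1}^{n-1}(c-1)^k = 1-\tau'$, which will dispose of the geometric tail. Third, and crucially, the piecewise formula for $\delta_i$ collapses once I cancel the shared factors: for $\delta_0$, the $(c-1)-1$ in the numerator matches the $c-2$ in the denominator, giving $\delta_0 = s\sigma'_0 + \sigma'_0(tc-s) = tc\sigma'_0$; and for $i \in \{2,4,\ldots,n-3\}$, the factor $(c-1)^{i+1} - (c-1)^{i-1} = (c-1)^{i-1}c(c-2)$ has the same cancellation, giving the compact form $\delta_i = c(tc-s)\sigma'_{i-1}$.

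Equipped with these, I would prove the five identities in turn. Equation (\ref{eqn-sigma}) is a direct substitution of the closed form $\sigma'_k = \sigma'_0(c-1)^k$ combined with the $\tau'$-identity: both sides evaluate to $h(1-\tau')/c$. Equation (\ref{eqn-delta}) uses the telescoping observation that $\sum_{i\in\{2,4,\ldots,n-3\}}\bigl((c-1)^{i+1}-(c-1)^{i-1}\bigr) = (c-1)^{n-2}-(c-1)$, so that $\delta_0+\delta_2+\cdots+\delta_{n-3}$ collapses to $s\sigma'_0 + \frac{\sigma'_0(tc-s)}{c-2}\bigl((c-1)^{n-2}-1\bigr)$; the somewhat intimidating formula for $\delta_{n-1}$ was engineered precisely to cancel this leftover and produce $t\sigma'_0 + (c-h)/c$. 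Equation (\ref{eqnsigmadelta}) then follows for free by adding (\ref{eqn-sigma}) and (\ref{eqn-delta}).

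The remaining two identities use the same telescoping idea over a shorter initial segment. For the odd case (\ref{eqn8}), I would compute $\sum_{j\in\{0,2,\ldots,i-1\}}\delta_j = tc\sigma'_0 + \frac{\sigma'_0(tc-s)}{c-2}\bigl((c-1)^i-(c-1)\bigr)$ and match it against the LHS rewritten as $tc\sigma'_0 + (tc-s)\sum_{k=1}^{i-1}\sigma'_k$; the two expressions coincide term by term. For the even case (\ref{eqn-complex}), after moving the common $\delta$-sum across to obtain the equivalent form $(c-1)\sum_j\delta_j = s\sum_{k=1}^i\sigma'_k - (s-t)c\sum_{k=1}^{i-1}\sigma'_k$, both sides reduce via $(c-1)\sigma'_k = \sigma'_{k+1}$ and a second geometric-series computation to a common value (essentially $t\sigma'_2 + \frac{tc-s}{c-2}(\sigma'_i - \sigma'_2)$).

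No step requires real insight; the only obstacle is the bookkeeping of index parities and telescoping ranges, and checking that the carefully engineered $\delta_{n-1}$ does cancel exactly the correct leftover arising from the telescoping sum of the $\delta_i$'s. Once the two compact identifications $\delta_0 = tc\sigma'_0$ and $\delta_i = c(tc-s)\sigma'_{i-1}$ are in hand, each of the five equations becomes essentially a one-line verification.
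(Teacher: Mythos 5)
Your proposal is correct and follows essentially the same route as the paper: close the geometric series via the recurrence $\sigma'_{k+1}=(c-1)\sigma'_k$, exploit the telescoping structure $(c-1)^{j+1}-(c-1)^{j-1}=(c-1)^{j-1}c(c-2)$ in the $\delta$-sums, and match the two sides of (\ref{eqn-complex}) and (\ref{eqn8}) against a common closed form; your explicit simplifications $\delta_0=tc\sigma'_0$ and $\delta_i=c(tc-s)\sigma'_{i-1}$ are just a slightly cleaner packaging of the same algebra. One minor slip: in the parenthetical at the end of the (\ref{eqn-complex}) argument, the common value is $tc\sigma'_1+\frac{tc-s}{c-2}(\sigma'_i-\sigma'_2)$, not $t\sigma'_2+\frac{tc-s}{c-2}(\sigma'_i-\sigma'_2)$, but this typo does not affect the validity of the plan.
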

\begin{proof}
	(\ref{eqn-sigma}) and (\ref{eqn-delta}) follow directly from the definitions, (\ref{eqnsigmadelta}) follows from 	(\ref{eqn-sigma}) and (\ref{eqn-delta}).
	
Now we prove (\ref{eqn-complex}). It follows from definitions that 	 
	 \begin{equation*}
	 \label{eqn-complexproof1}
	 \begin{aligned}
	 &  (\delta_0+\delta_2+ \ldots + \delta_{i-2})\chi_f(G) -(\delta_0+\delta_2+ \ldots + \delta_{i-2})  \\
	 &=[s\sigma'_0 + \frac{\sigma'_0(t\chi_f(G)-s)}{\chi_f(G)-2}((\chi_f(G)-1)^{i-1} -1)](\chi_f(G)-1)\\
	 &=  s \sigma'_1 +  \frac{\sigma'_0(t\chi_f(G)-s)}{\chi_f(G)-2}((\chi_f(G)-1)^i -(\chi_f(G)-1)), 
	 \end{aligned}
	 \end{equation*}	 
	 and 
	  \begin{equation*}
	  \label{eqn-complexproof2}
	  \begin{aligned}
	  &s (\sigma'_1+\sigma'_2+\ldots + \sigma'_i) -  (s-t)\chi_f(G)   (\sigma'_1+\sigma'_2+\ldots + \sigma'_{i-1})  \\
	  &=[s - (s-t)\chi_f(G)]\sigma'_0 \frac{(\chi_f(G)-1)^i - (\chi_f(G)-1)}{\chi_f(G)-2} + s \sigma'_0 (\chi_f(G)-1)^i  \\
	  &= (t\chi_f(G)-s)   \sigma'_0 \frac{(\chi_f(G)-1)^i - (\chi_f(G)-1)}{\chi_f(G)-2}\\
	  & +(2s-s \chi_f(G) ) \sigma'_0 \frac{(\chi_f(G)-1)^i - (\chi_f(G)-1)}{\chi_f(G)-2} + s \sigma'_0 (\chi_f(G)-1)^i  \\
	  &= \frac{\sigma'_0(t\chi_f(G)-s)}{\chi_f(G)-2}((\chi_f(G)-1)^i -(\chi_f(G)-1)) \\
	  & - s \sigma'_0 [(\chi_f(G)-1)^i - (\chi_f(G)-1) ]+ s \sigma'_0 (\chi_f(G)-1)^i  \\
	   &=  s \sigma'_1 +  \frac{\sigma'_0(t\chi_f(G)-s)}{\chi_f(G)-2}((\chi_f(G)-1)^i -(\chi_f(G)-1)). 
	  \end{aligned}
	  \end{equation*}
	So $$s (\sigma'_1+\sigma'_2+\ldots + \sigma'_i) -  (s-t)\chi_f(G)   (\sigma'_1+\sigma'_2+\ldots + \sigma'_{i-1}) =  (\delta_0+\delta_2+ \ldots + \delta_{i-2})\chi_f(G) -(\delta_0+\delta_2+ \ldots + \delta_{i-2})$$
	and (\ref{eqn-complex}) holds.
	
	Next we prove (\ref{eqn8}).
	 \begin{equation*}
	 \label{eqn8proof}
	 \begin{aligned}
	 & t (\sigma'_0+\sigma'_1+ \ldots + \sigma'_{i-1})\chi_f(G) - s  ( \sigma'_1+ \ldots + \sigma'_{i-1})  \\
	 &=\frac{( t\chi_f(G)-s)\sigma'_0}{\chi_f(G)-2}  ((\chi_f(G)-1)^i - (\chi_f(G)-1)) + t \sigma'_0\chi_f(G)\\
	  &=\frac{( t\chi_f(G)-s)\sigma'_0}{\chi_f(G)-2}  ((\chi_f(G)-1)^i - (\chi_f(G)-1)) + (t\chi_f(G)-s) \sigma'_0 + s\sigma'_0 \\
	  &=\frac{( t\chi_f(G)-s)\sigma'_0}{\chi_f(G)-2}  ((\chi_f(G)-1)^i - (\chi_f(G)-1)+ (\chi_f(G)-2)) + s\sigma'_0 \\
	  &=\frac{( t\chi_f(G)-s)\sigma'_0}{\chi_f(G)-2}  ((\chi_f(G)-1)^i - 1) + s\sigma'_0 \\
	  & = \delta_0+\delta_2+\ldots + \delta_{i-1}.
	 \end{aligned}
	 \end{equation*}
	
\end{proof}

 Let $\mu:\mathcal{I}(G)\longrightarrow [0,1]$ be a fractional colouring of $G$ of weight $\chi_f(G)$.
 
 We define 
 $\mu':\mathcal{I}(\Delta_{H,n}(G))\longrightarrow [0,1]$ by 
 
 \[
 \mu'(J)=
 \begin{cases}
 (\sigma'_{k}+\sigma'_{k+1})\mu(I) &  \text{if $J\in I_{k,j}, I\in \mathcal{I}(G), 1 \leq k \equiv1\pmod{2}\leq n-2, j\in[s]$},\cr 
 \delta_k\mu(I)& \text{if $J=I_{k},  I\in \mathcal{I}(G), 0 \le k\equiv 0\pmod{2}\le n-1$},\cr
 \frac{\tau'}{t} &  \text{if $J=O_j, j\in[s]$},\cr
 0 &  \text{otherwise}.\cr
 \end{cases}
 \]

 \begin{figure}[h]
 	\centering
 	\includegraphics[width=6.5in]{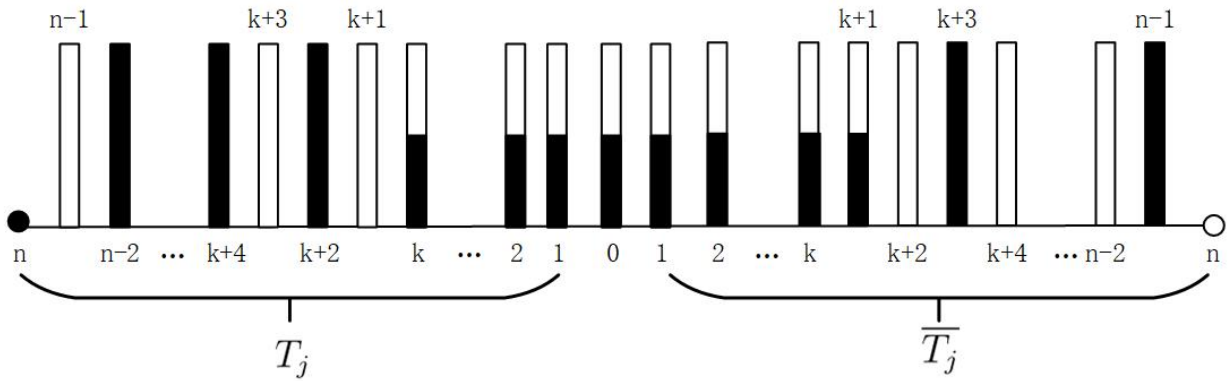}
 	\caption{the independent set $I_{k,j}$ with weight $(\sigma'_k+\sigma'_{k+1})\mu(I)$.}
 	\label{fig3}
 \end{figure}
 
 \begin{figure}[h]
 	\centering
 	\includegraphics[width=5in]{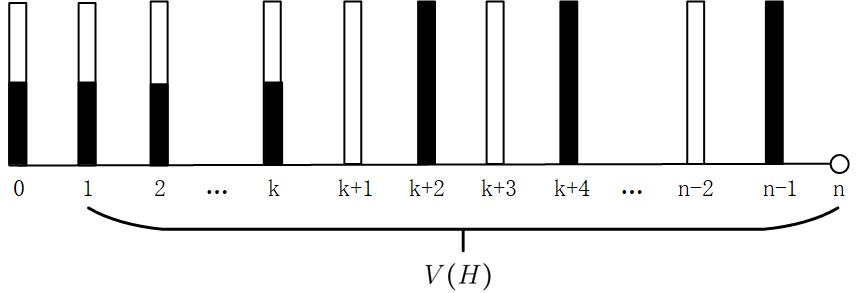}
 	\caption{the independent set $I_k$ with weight $\delta_k\mu(I)$.}
 	\label{fig4}
 \end{figure}
 
 \begin{figure}[h]
 	\centering
 	\includegraphics[width=6.5in]{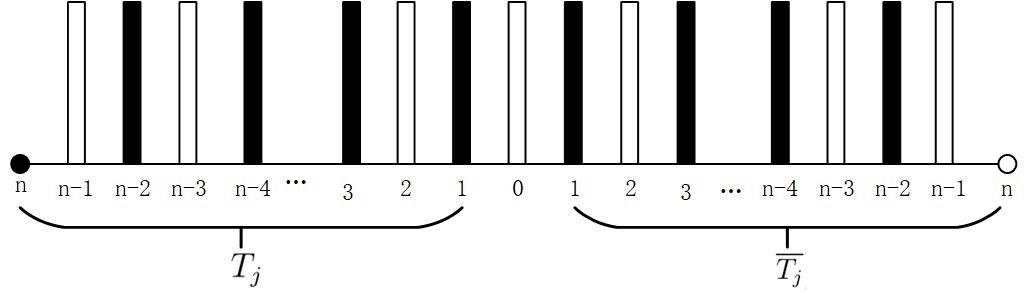}
 	\caption{the independent set $O_j$ with weight $\frac{\tau'}{t}$.}
 	\label{fig5}
 \end{figure}

 For any $I \in \mathcal{I}(G)$,
\begin{eqnarray*}
&&\sum_{k\in \{1,3, \ldots, n-2\}, j \in [s] }   \mu'(I_{k,j})  +  \sum_{k\in \{0,2, \ldots, n-1\}}\mu'(I_k)\\
&=&\left( s(\sigma'_1+\ldots + \sigma'_{n-1}) + \delta_0+\delta_2+ \ldots + \delta_{n-1} \right) \mu(I) = \mu(I),
\end{eqnarray*}
  
  and $$\sum_{j \in [s]} \mu'(O_j) = \chi_f(H) \tau'.$$
 Hence 	
 $$\sum_{J\in \mathcal{I}(\Delta_{H,n}(G))}\mu'(J)=\chi_f(G)+\chi_f(H)\tau'.$$

 It  remains to verify that $\sum_{u\in J}\mu'(J)\ge 1$ for every vertex $u$ of $\Delta_{H,n}(G)$.

An  observation we shall use frequently below is that 
\begin{equation}
\label{eqn-oj}\sum_{j \in [s]}\mu'(O_j)= \frac st \tau' = t\sigma'_0\chi_f(G).
\end{equation}

 For $u \in V(\Delta_{H,n}(G))$, let 
 \begin{center}
 	$K(u) = \{(k,j): u \in I_{k,j}\}$ and $K'(u) = \{k: u \in I_k\}$.
 \end{center}

 \bigskip
 \noindent
 
 {\bf Case 1} $u=(\star,v)$.
 
 Then $$K(u)=\{(k,j): 1\le k\le n-2, k \equiv 1\pmod{2}, j\in v\}$$ 
 and $u \in O_j$ for $j \in v$.
 By noting that $\tau' = t \sigma'_0 \frac{\chi_f(G)}{\chi_f(H)}$, we have
 \begin{equation*}
 \begin{split}
 \sum_{u\in J}\mu'(J)&=\sum_{I\in \mathcal{I}(G)}t(\sigma'_1+\sigma'_2+\ldots+\sigma'_{n-1})\mu(I) +  \tau'\\
 &=t(\sigma'_1+\sigma'_2+\ldots+\sigma'_{n-1})\chi_f(G)+t \sigma'_0 \frac{\chi_f(G)}{\chi_f(H)}\\
 &= (t\sigma'_0+s(\sigma'_1+\sigma'_2+\ldots+\sigma'_{n-1})) \frac{\chi_f(G)}{\chi_f(H)}\\
 &=1.
 \end{split}
 \end{equation*}
 

 \bigskip
 \noindent
 {\bf Case 2} $u=(x,0)$.
 
 In this case,
 $K(u)=\{ (k,j):1\le k\equiv 1\pmod{2} \le n-2, x\in I,j\in[s]\}$ and $K'(u)=\{k:0\le k\equiv 0\pmod{2}\le n-1\}$ for $x \in I$. Therefore
 \begin{equation*}
 \begin{split}
 \sum_{u\in J, J \in \mathcal{I}(\Delta_{H,n}(G))}\mu'(J)&=\sum_{x\in I, I\in \mathcal{I}(G)}\left(s(\sigma'_1+\ldots+\sigma'_{n-1})+(\delta_0+\delta_2+\ldots+\delta_{n-1}) \right)\mu(I)\\
 &= 1.
 \end{split}
 \end{equation*}

 \bigskip
 \noindent
 {\bf Case 3} $u=((x, i),v)$, $i \neq 0$.

 \bigskip
 \noindent
 {\bf Case 3(i)}   $i$ is even.
 
In this case, 
 	\begin{eqnarray*}
 		K(u) &=& \{(k,j):  i+1\le k \le n-2, k \equiv 1 \pmod{2}, x\in I, j\in [s]\} \cup \{(k,j):k=i-1, j\notin v, x\in I\}\\
 		&\cup& \{ (k,j): 1 \le k \le i-3,  k \equiv 1 \pmod{2}, j\notin v\}, \\
 		K'(u)&=&\{k:i\le k \le n-1, k \equiv 0 \pmod{2}, x\in I\}\\
 		& \cup& \{ k:  0\le k \le i-2, k \equiv 0 \pmod{2}\}.
 	\end{eqnarray*}		
 Hence   
 \begin{equation*}
 \begin{split}
 \sum_{u\in J,J \in \mathcal{I}(\Delta_{H,n}(G))}\mu'(J)&=(s-t)\sum_{I\in \mathcal{I}(G)}(\sigma'_{1} + \ldots+\sigma'_{i-2})\mu(I)+(s-t)\sum_{x \in I, I\in \mathcal{I}(G)}(\sigma'_{i-1}+\sigma'_{i})\mu(I)\\
 &+s\sum_{x \in I, I\in \mathcal{I}(G)}(\sigma'_{i+1}+\sigma'_{i+2}+\ldots+\sigma'_{n-1})\mu(I)\\
 &+\sum_{ I\in \mathcal{I}(G)}( \delta_0+\delta_2+\ldots+\delta_{i-2})\mu(I) + \sum_{x \in I, I\in \mathcal{I}(G)}(\delta_i+\delta_{i+2}+\ldots+\delta_{n-1})\mu(I)\\
 &=(s-t)(\sigma'_1+\sigma'_2+\ldots+\sigma'_{i-2})\chi_f(G)+(s-t)(\sigma'_{i-1}+\sigma'_{i})\\
 &+s(\sigma'_{i+1}+\sigma'_{i+2}+\ldots+\sigma'_{n-1})\\
 &+(\delta_0+\delta_2+\ldots+\delta_{i-2})\chi_f(G) + (\delta_i+\delta_{i+2}+\ldots+\delta_{n-1})\\
  &= (s-t)(\sigma'_1+\sigma'_2+\ldots+\sigma'_{i-1})\chi_f(G)+s(\sigma'_{i+1}+\sigma'_{i+2}+\ldots+\sigma'_{n-1}) \\
  &+(\delta_0+\delta_2+\ldots+\delta_{i-2})\chi_f(G) + (\delta_i+\delta_{i+2}+\ldots+\delta_{n-1})\\
 &=s(\sigma'_{1}+\sigma'_{2} + \ldots+\sigma'_{n-1}) + (\delta_0 + \delta_2 +\ldots + \delta_{n-1})\\
 &= 1.
 \end{split}
 \end{equation*}	
 	
 The third equality follows from the fact that $\sigma'_{i-1}+\sigma'_i = \sigma'_{i-1}\chi_f(G)$, and the fourth equality follows from (\ref{eqn-complex}) and the last equality follows from (\ref{eqnsigmadelta}).

 \bigskip
 \noindent
 {\bf Case 3(ii)}  $i>0$ is odd.
 
 Then 
 	\begin{eqnarray*}
 		K(u) &=& \{(k,j):  i\le k \le n-2, k \equiv 1 \pmod{2}, x\in I, j\in [s]\}\\
 		&\cup& \{ (k,j): 1 \le k \le i-2,  k \equiv 1 \pmod{2}, j\in v\}, \\
 		K'(u)&=&\{k:i+1\le k \le n-1, k \equiv 0 \pmod{2}, x\in I\},
 	\end{eqnarray*}	
 
 	and $u \in O_j$ for $j \in [s]$.
 Hence
 \begin{equation*}
 \begin{split}
 \sum_{u\in J,J \in \mathcal{I}(\Delta_{H,n}(G))}\mu'(J)&=t\sum_{I\in \mathcal{I}(G)}(\sigma'_{1} + \ldots+\sigma'_{i-1})\mu(I)+s\sum_{x \in I, I\in \mathcal{I}(G)}(\sigma'_{i}+\sigma'_{i+1}+\ldots+\sigma'_{n-1})\mu(I)+ \frac s t \tau'
 \\
 &+(\delta_{i+1}+\delta_{i+3}+\ldots+\delta_{n-1})\\
 &=t(\sigma'_1+\ldots+\sigma'_{i-1}) \chi_f(G) +s(\sigma'_{i}+\sigma'_{i+1}+\ldots+\sigma'_{n-1})+ t\sigma'_0\chi_f(G)\\
  &+(\delta_{i+1}+\delta_{i+3}+\ldots+\delta_{n-1})\\
 &= s(\sigma'_{1}+\sigma'_{2} + \ldots+\sigma'_{n-1}) + \delta_0+\delta_2+ \ldots + \delta_{n-1}\\
 &= 1.  
 \end{split}
 \end{equation*}
The second  equality follows from (\ref{eqn-oj}) and the third equality follows from (\ref{eqn8}) and the last equality follows from (\ref{eqnsigmadelta}).
 
 This completes the proof of   Theorem \ref{thm-main} for odd $n$.

\section{The case when $h$ is not a constant function}

We have determined the fractional chromatic number of $\Delta_{H,h}(G)$ for graphs $G,H$ with $\chi_f(H) \le \chi_f(G)$ and with $h: V(H) \to \mathbb{N}$ be a constant map, namely $h(v)=n$ for some positive integer $n$, for all $v$.

It remains as an open problem to determine the fractional chromatic number of $\Delta_{H,h}(G)$, for $h:  V(H) \to \mathbb{N}$ be an arbitrary mapping. The following lemma is easy.

\begin{lemma}
	\label{lem-hh}
	Assume $G$ and $H$ are graphs and $h,h': V(H) \to \mathbb{N}$ are two mappings with $h(v) \le h'(v)$ for all $v \in V(H)$. Then $\Delta_{H,h'}(G) \to \Delta_{H,h}(G)$, and consequently
	$\chi_f(\Delta_{H,h'}(G)) \le \chi_f(\Delta_{H,h}(G))$.
\end{lemma}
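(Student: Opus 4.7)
The plan is to construct a single homomorphism $\varphi: \Delta_{H,h'}(G) \to \Delta_{H,h}(G)$ cone-by-cone, using the freedom afforded by the base-loop at vertex $0$ of $P^\circ_n$ to absorb the extra layers.

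For each $v \in V(H)$, set $n_v = h(v)$ and $k_v = h'(v) - h(v) \ge 0$, and define $\varphi_v: \Delta_{n_v + k_v}(G,v) \to \Delta_{n_v}(G,v)$ by collapsing the bottom $k_v+1$ layers of the source onto the base of the target and shifting the remaining layers down by $k_v$:
\[
\varphi_v((x,i),v) = \begin{cases} ((x,0),v), & 0 \le i \le k_v, \\ ((x, i-k_v), v), & k_v+1 \le i \le n_v + k_v - 1, \end{cases}
\]
and $\varphi_v((\star,v)) = (\star,v)$.

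Checking that each $\varphi_v$ is a homomorphism is routine. Edges of $\Delta_{n_v+k_v}(G,v)$ split into four types: base edges $(x,0)(y,0)$, which map to themselves; consecutive-layer edges $(x,i)(y,i+1)$ with $i \le k_v-1$ entirely inside the collapsed block, which map to base edges $(x,0)(y,0)$ in the target (this is precisely where the self-loop at vertex $0$ of $P^\circ_{n_v}$ is used); the single transition edge $(x,k_v)(y,k_v+1)$ from the collapsed region to the shifted region, which maps to the level-$0$-to-level-$1$ edge; consecutive-layer edges inside the shifted region, which are preserved by the shift; and finally the apex edges $(x,n_v+k_v-1)(\star,v)$, which map to $(x,n_v-1)(\star,v)$ and are edges in the target since $\star$ is adjacent to layer $n_v-1$.

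Because each $\varphi_v$ sends $(x,0)$ to $(x,0)$, the cone-wise maps agree on the shared base after identification, so they assemble into a well-defined map $\varphi$ on all of $\Delta_{H,h'}(G)$. Because $\varphi_v((\star,v)) = (\star,v)$ for every $v$, each apex edge $(\star,v)(\star,v')$ with $vv' \in E(H)$ is sent to the corresponding apex edge of $\Delta_{H,h}(G)$, and $\varphi$ is a graph homomorphism. The $\chi_f$ inequality is then immediate from the general fact that $A \to B$ forces $\chi_f(A) \le \chi_f(B)$. The only real subtlety lies in anticipating that each $\varphi_v$ must simultaneously fix the base (forced by the identification of bases) and fix the apex (forced by the rigidity of the $E(H)$-edges between apices, which must hold for arbitrary $H$); this double constraint is compatible only because the loop at $0$ of $P^\circ_n$ permits an arbitrary number of extra layers to be compressed down onto the base.
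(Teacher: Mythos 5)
Your proof is correct and uses the same key idea as the paper's: exploit the loop at vertex $0$ of $P^\circ_n$ (equivalently, the fact that the base layer induces a copy of $G$) to collapse the surplus bottom layers onto the base, shift the remaining layers down, and fix the apex so that the $E(H)$-edges between apices are preserved. The paper reduces to the case where $h'$ exceeds $h$ by $1$ at a single vertex and shifts that cone down by one; your map is exactly the composition of those single-increment shifts, so the two arguments coincide in substance.
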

\begin{proof}
	It suffices to consider the case that $h'=h$ except that for one vertex $u$, $h'(u)=h(u)+1$. 
	Let 
	$\phi: V(\Delta_{H,h'}(G)) \to V(\Delta_{H,h}(G))$ be defined as 
	\[
	\phi(w) = \begin{cases} ((x,i-1),u), &\text{ if $w=((x,i),u)$ and $1 \le i \le h'(u)$}, \cr
	w, &\text{ otherwise}. \cr 
	\end{cases}
	\]
	It is straightforward to verify that $\phi$ is a homomorphism from $\Delta_{H,h'}(G)$ to $\Delta_{H,h}(G)$.
\end{proof}

\begin{corollary}
	Assume $G,H$ are graphs with $\chi_f(H) \le \chi_f(G)$, and $h:V(H) \to \mathbb{N}$ is a mapping. Let $n = \min \{h(v): v \in V(H)\}$ and let $X = \{v \in V(H): h(v)=n\}$. Then 
	$$\chi_f(\Delta_{H[X], n}(G)) \le \chi_f(\Delta_{H,h}(G)) \le \chi_f(\Delta_{H, n}(G)).$$
	In particular, if $n$ is even, then $\chi_f(\Delta_{H,h}(G))=\chi_f(G)+\tau(G,n)$.
\end{corollary}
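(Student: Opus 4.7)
The plan is to prove the sandwich inequality by addressing each bound separately, and then invoke Theorem~\ref{thm-main} for the special case.

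For the right-hand inequality $\chi_f(\Delta_{H,h}(G)) \le \chi_f(\Delta_{H,n}(G))$, I would apply Lemma~\ref{lem-hh} directly. Let $h_n \colon V(H) \to \mathbb{N}$ be the constant mapping $h_n(v) = n$. By the definition of $n = \min\{h(v) : v \in V(H)\}$, we have $h_n(v) \le h(v)$ for every $v \in V(H)$. Then Lemma~\ref{lem-hh} yields a homomorphism $\Delta_{H,h}(G) \to \Delta_{H,h_n}(G) = \Delta_{H,n}(G)$, and so $\chi_f(\Delta_{H,h}(G)) \le \chi_f(\Delta_{H,n}(G))$.

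For the left-hand inequality $\chi_f(\Delta_{H[X],n}(G)) \le \chi_f(\Delta_{H,h}(G))$, I would argue that $\Delta_{H[X],n}(G)$ embeds as an induced subgraph of $\Delta_{H,h}(G)$. Indeed, since $h(v) = n$ for every $v \in X$, the union of the cones $\Delta_{h(v)}(G,v) = \Delta_n(G,v)$ for $v \in X$, together with the base vertices $\{(x,0) : x \in V(G)\}$ and the apex edges $\{(\star,v)(\star,v') : vv' \in E(H[X])\}$, is precisely a copy of $\Delta_{H[X],n}(G)$ sitting inside $\Delta_{H,h}(G)$. The remaining vertices of $\Delta_{H,h}(G)$ contribute no further edges among these vertices, so the embedding is induced. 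The fractional chromatic number is monotone under taking (induced) subgraphs, which gives the desired bound.

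For the concluding statement when $n$ is even, I would invoke Theorem~\ref{thm-main} at both ends of the sandwich. Since $\chi_f(H[X]) \le \chi_f(H) \le \chi_f(G)$, Theorem~\ref{thm-main} applies to both $\Delta_{H[X],n}(G)$ and $\Delta_{H,n}(G)$, and for even $n$ it gives
\[
\chi_f(\Delta_{H[X],n}(G)) = \chi_f(\Delta_{H,n}(G)) = \chi_f(G) + \tau(G,n).
\]
The two inequalities just proved then force $\chi_f(\Delta_{H,h}(G)) = \chi_f(G) + \tau(G,n)$, as claimed.

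There is no real obstacle here: both inequalities are immediate consequences of existing tools (Lemma~\ref{lem-hh} and subgraph monotonicity of $\chi_f$), and the equality in the even case is an immediate corollary of the main theorem since the upper and lower bounds coincide. The only minor point requiring care is verifying that the copy of $\Delta_{H[X],n}(G)$ inside $\Delta_{H,h}(G)$ is genuinely induced, which follows directly from Observation~\ref{obs-ind} and the structure of the construction.
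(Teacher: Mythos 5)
Your proposal is correct and follows the approach the paper clearly intends: the upper bound via Lemma~\ref{lem-hh} applied to the constant map $h_n\equiv n$, the lower bound via the observation that $\Delta_{H[X],n}(G)$ sits inside $\Delta_{H,h}(G)$ as a subgraph (monotonicity of $\chi_f$ under subgraphs suffices, though your induced-subgraph claim is also valid), and the even-$n$ conclusion from Theorem~\ref{thm-main} applied at both ends of the sandwich. The paper states this corollary without proof precisely because it is this immediate, so there is nothing to compare beyond confirming your argument is complete and correct.
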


A natural question is whether the upper bound or lower bound is closer to the true value. The following result shows that if $H=k_2$, then the lower bound is attained.

\begin{theorem}
	For any graph $G$ with at least one edge, if $h: V(K_2) \to \mathbb{N}$ is a mapping with $h(v_2) \ge h(v_1)=n$, then 
	$$\chi_f(\Delta_{K_2,h}(G))= \chi_f(\Delta_{n}(G)).$$
\end{theorem}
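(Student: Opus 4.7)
The plan is to prove the two inequalities separately. The lower bound $\chi_f(\Delta_{K_2,h}(G)) \ge \chi_f(\Delta_n(G))$ is immediate, since the subgraph of $\Delta_{K_2,h}(G)$ induced by the base together with the $v_1$-cone $\Delta_n(G,v_1)$ is a copy of $\Delta_n(G)$.

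For the matching upper bound in the substantive case $m := h(v_2) > n$, I would set $r = \chi_f(\Delta_n(G))$ and apply Observation \ref{obs-a}: it suffices to exhibit a loop-vertex $\phi_0$ in $Kn_r^G$ together with two adjacent constant maps $\phi_{v_1}, \phi_{v_2}$ reachable from $\phi_0$ by walks of lengths $n$ and $m$ respectively. I would start from a homomorphism $f : \Delta_n(G) \to Kn_r$ (which exists because $\chi_f(\Delta_n(G)) = r$); setting $\phi_i(x) = f((x,i))$ for $0 \le i \le n-1$ and letting $\phi_a$ be the constant map of value $a = f(\star)$, the layer restrictions form a walk $\phi_0, \phi_1, \ldots, \phi_{n-1}, \phi_a$ of length $n$ from the loop-vertex $\phi_0$ to $\phi_a$. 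Since $G$ has an edge, $r \ge \chi_f(G) \ge 2$, so I can pick a measurable set $c \subseteq [0,r] \setminus a$ of measure $1$ and let $\phi_c$ denote the corresponding constant map; then $a \cap c = \emptyset$ makes $\phi_a\phi_c$ an edge of $Kn_r^G$, so I would set $\phi_{v_1} = \phi_a$ and $\phi_{v_2} = \phi_c$.

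The main step, and the main obstacle, is producing a walk of length $m$ from $\phi_0$ to $\phi_c$; the difficulty is a parity mismatch. I plan to use two auxiliary moves: the alternating walk $\phi_a, \phi_c, \phi_a, \phi_c, \ldots$ (valid because $a \cap c = \emptyset$) and the loop edge at $\phi_0$ (valid because $\phi_0$ is a loop-vertex). If $m - n$ is odd I would append the alternation of length $m - n$ to the length-$n$ walk, ending at $\phi_c$. If $m - n$ is even (so $m - n \ge 2$), I would first insert an initial loop step $\phi_0 \to \phi_0$ and then append an alternation of length $m - n - 1$ (now odd), again ending at $\phi_c$. In either case this yields a walk of length $m$ from $\phi_0$ to $\phi_c$, so by Observation \ref{obs-a} a homomorphism $\Delta_{K_2,h}(G) \to Kn_r$ exists, establishing $\chi_f(\Delta_{K_2,h}(G)) \le r$.
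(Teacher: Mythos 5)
Your proof is correct and takes a genuinely different route from the paper. The paper argues entirely by homomorphisms between the graphs themselves: Lemma~\ref{lem-hh} reduces the problem to the case $h(v_2)=n+1$, and then an explicit graph homomorphism $\Delta_{K_2,h}(G) \to \Delta_n(G)$ is written down (map the $v_1$-cone identically, fold layer $n$ of the $v_2$-cone onto $\star$, and send $(\star,v_2)$ to a fixed vertex of layer $n-1$). Combined with the subgraph inclusion this shows $\Delta_{K_2,h}(G)$ and $\Delta_n(G)$ are homomorphically \emph{equivalent}, which is strictly stronger than equality of fractional chromatic numbers. You instead work inside the exponential graph $Kn_r^G$ via Observation~\ref{obs-a}, exhibiting walks of the required lengths from a single loop-vertex $\phi_0$ to two adjacent constant maps $\phi_a,\phi_c$, with the parity mismatch repaired by inserting a step along the loop at $\phi_0$ or by padding with the alternation $\phi_a,\phi_c,\phi_a,\dots$. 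This yields only the $\chi_f$-equality and invokes the $Kn_r$ machinery, but it bypasses Lemma~\ref{lem-hh}, treats all $m>n$ uniformly, and is a pleasant concrete illustration of the ``walks to constant maps'' viewpoint sketched in the introduction. One caveat applies to both arguments: the case $m=n$, which you set aside as non-substantive and the paper silently omits, must in fact be excluded, since for odd $n$ Theorem~\ref{thm-main} gives $\chi_f(\Delta_{K_2,n}(G)) = \chi_f(G)+2\tau'(G,n,K_2) > \chi_f(G)+\tau(G,n) = \chi_f(\Delta_n(G))$; the theorem should be read with $h(v_2) > h(v_1)$.
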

\begin{proof}
	By Theorem \ref{thm-main} and Lemma \ref{lem-hh}, it suffices to show that if $h(v_2)=h(v_1)+1=n+1$, then $\Delta_{K_2,h}(G) \to \Delta_n(G)$. 
	
	Let $\phi: V(\Delta_{K_2,h}(G)) \to V(\Delta_{n}(G))$ be defined as follows:
	\[
	\phi(w) = \begin{cases} (x,0), &\text{ if $w = (x,0)$}, \cr (x,i), &\text{ if $1 \le i \le n-1$, $w\in \{((x,i),v_1),((x,i),v_2)\}$}, \cr
	\star,   &\text{ if  $w=(\star,v_1)$  or $w=((x,n),v_2)$}, \cr 
	(x,n-1), &\text{ if $w = (\star, v_2)$, where $x$ is an arbitrary vertex of $G$}, \cr
	\end{cases}
	\]
	It is easy to check that $\phi$ is a homomorphism from $\Delta_{K_2,h}(G)$ to $\Delta_{ n}(G)$.  
\end{proof}

\subsection{The chromatic number of $\Delta_{H, h}(G)$}

It is easy to see  that   $\chi(\Delta_{H,1}(G))=\chi(G)+\chi(H)$
for any graphs $G$ and $H$. In general, we have the following proposition.

\begin{proposition}
Let $X=\{v: h(v)=1\}$. Assume $\chi(H-X) \le \chi(G)$.  Then  $$\chi(\Delta_{H,h}(G)) \le \chi(G)+ \chi(H[X]) +1.$$
\end{proposition}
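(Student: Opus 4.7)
The plan is to exhibit a proper coloring of $\Delta_{H,h}(G)$ using $\chi(G)+\chi(H[X])+1$ colors directly. Let $k=\chi(G)$ and fix a proper $k$-coloring $c_G\colon V(G)\to[k]$. Since $\chi(H-X)\le k$ by hypothesis, fix a proper coloring $a\colon V(H-X)\to[k]$ of $H-X$. Introduce one ``buffer'' color $b\notin[k]$, a disjoint palette $P$ of size $\chi(H[X])$, and a proper coloring $c_X\colon X\to P$ of $H[X]$. The total palette $[k]\cup\{b\}\cup P$ has exactly $\chi(G)+\chi(H[X])+1$ colors.

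Next I would define $\phi\colon V(\Delta_{H,h}(G))\to[k]\cup\{b\}\cup P$ as follows. Every base vertex $(x,0)$ takes color $c_G(x)$. For each $v\in V(H)-X$ (so $h(v)\ge 2$), on the cone $\Delta_{h(v)}(G,v)$ I set $\phi((y,i,v))=c_G(y)$ for $1\le i\le h(v)-2$, assign the constant color $b$ to the whole layer $h(v)-1$, and put $\phi((\star,v))=a_v\in[k]$. For each $v\in X$ the cone has no interior layer, and I simply set $\phi((\star,v))=c_X(v)\in P$.

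The remaining task is to verify that $\phi$ is proper, which is a short case analysis on the edge types of $\Delta_{H,h}(G)$. Edges inside the base are handled by properness of $c_G$. For an edge between consecutive layers $i,i+1$ inside a cone over some $v\in V(H)-X$: if both layers carry $c_G$, then the edge comes from an edge of $G$ and properness of $c_G$ is enough; if one layer uses $c_G$ and the other $b$, the colors lie in disjoint palettes. The edge from layer $h(v)-1$ to $(\star,v)$ pairs $b$ with $a_v\in[k]$. For $v\in X$, the apex $(\star,v)$ is adjacent to the whole base, but $c_X(v)\in P$ is disjoint from $[k]$. Finally, an apex-apex edge $(\star,v)(\star,v')$ with $vv'\in E(H)$ is proper because $a$ properly colors $H-X$, $c_X$ properly colors $H[X]$, and $[k]\cap P=\emptyset$.

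The argument has no serious obstacle. The crucial design point is the buffer color $b$: without it, the apex $(\star,v)$ for $v\in V(H)-X$ would have to avoid every color used on layer $h(v)-1$, forcing apex colors outside $[k]$ and an extra palette of size $\chi(H)$ rather than $\chi(H[X])+1$. Colouring the top non-apex layer of each such cone uniformly by $b$ frees the apex colors $a_v$ to live inside $[k]$, which is exactly the mechanism giving the asserted bound. The only mildly degenerate case to remember is $h(v)=2$, where layer $1=h(v)-1$ is adjacent to the base directly; but the compatibility is automatic since $b\notin[k]$.
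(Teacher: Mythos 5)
Your proof is correct, and it follows the same high-level strategy as the paper---a direct construction of a proper colouring with $\chi(G)+\chi(H[X])+1$ colours---but the detailed layer design is genuinely different. The paper colours all interior layers of the cone over $v\in V(H)-X$ with only two colours, alternating $k+k'+1$ on odd layers and $k$ on even layers; this forces a parity case at the apex, where $(\star,v)$ gets $\phi(v)$ unless $h(v)$ is odd and $\phi(v)=k$, in which case it is switched to $k+k'+1$. You instead copy $c_G$ onto layers $1,\dots,h(v)-2$ and dedicate the single buffer colour $b$ uniformly to layer $h(v)-1$, so that every apex $(\star,v)$ with $v\notin X$ sees only $b$ in its cone and can take any colour of $[k]$ without a parity distinction. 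Both use exactly the same palette size; the paper's version is a bit more economical within layers, while yours isolates the one new colour $b$ into a single buffer layer and eliminates the case analysis at the apex, which makes the verification shorter and cleaner. One small thing worth recording explicitly: the construction relies on $h(v)\ge 2$ for $v\notin X$, which holds precisely because $X=\{v:h(v)=1\}$ and $h$ takes values in $\mathbb{N}$.
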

\begin{proof}
	Assume $\chi(G)=k$ and $\chi(H[X])=k'$.
We colour vertices in the base of $\Delta_{H,h}(G)$ by  colours $\{1,2,\ldots, k\}$, and colour
$\{(\star, v): v \in X\}$ by colours $\{k+1,k+2, \ldots, k+k'\}$. For $v \in V(H)-X$,   we  colour vertices $((x,i), v)$ for $1 \le i\le h(v)-1$ by colour $k+k'+1$ if $i$ is odd, and by $k$ if $i$ is even. Let $\phi$ be a $k$-colouring of $H-X$ with colours $\{1,2,\ldots,k\}$, then we colour $(\star, v)$ by colour $\phi(v)$, unless $h(v)$ is odd and $\phi(v)=k$, and in which case, we colour $(\star, v)$ by colour $k+k'+1$. 
	\end{proof}

Thus if $n \ge 2$ and $\chi(\Delta_n(G)) = \chi(G)+1 > \chi(H)$, then we have  $\chi(\Delta_{H,h}(G)) = \chi(\Delta_n(G)) = \chi(G)+1$, provided that $h(v)\le n$ for some $v$, and $h(u) >1$ for all $u \in V(H)$.
 
  Assume $\chi(\Delta_n(G)) = \chi(G)$, $\chi(H) \le \chi(G)$ and $h: V(H) \to \mathbb{N}$ satisfies $h(v) \ge n$ for all $v$. A natural question is whether  equality $\chi(\Delta_{H, h}(G)) =\chi(\Delta_n(G))=\chi(G)$ always hold?

In the following, we construct a graph $G$ for which $  \chi(\Delta_n(G))=\chi(G)$, however, 
$\chi(\Delta_{K_2,n}(G)) = \chi(\Delta_n(G))+1$. 

Let $G$ be the circulant graph $G = C_7^2$ whose vertices are the integers modulo $7$, where two vertices $u, v$ are adjacent if $u-v \in \{\pm1, \pm2 \}$. It was shown in \cite{Tardif} that  $\chi(G)=\chi(\Delta_3(G))=4$. In the following, we show that   $\chi(\Delta_{K_2,3}(G))=5$. 

Assume to the contrary that $\Delta_{K_2,3}(G)$ has a proper $4$-colouring $\varphi$. 
Let 
$$C_0=[ \varphi(1,0), \varphi(2,0),\ldots,\varphi(7,0)].$$
For $i, j= 1,  2$,
let 
$$C_{i,j}=[ \varphi((1,i),v_j), \varphi((2,i), v_j),\ldots,\varphi((7,i), v_j))].$$
Up to a permutation of the colours and an automorphism of $G$, there is only one way to colour $C_0$ (which induces a copy of $G$):  $C_0=[1,2,3,1,2,3,4]$. 

Then it is easy to verify that $C_{1,j}$ has exactly four possibilities,  namely $$ C_{1,j} \in \{ [1,2,3,1,2,3,4], [1,2,3,4,2,3,4], [1,2,4,1,2,3,4], [1,2,4,4,2,3,4]\}.$$ In the first three cases, it is easy to verify that $C_{2,j}$ contains all the 4 colours $1,2,3,4$. But then there is no colour for   $(\star,v_j)$, a contradiction.

Assume $C_{1,j}=[1,2,4,4,2,3,4]$.  To save one colour for $(\star,v_j)$,  there is only one choice for $C_{2,j}$, i.e.,  $C_{2,j} = [1,3,3,1,1,3,4]$. Hence, $\varphi(\star,v_j)=2$. But then we have $\varphi(\star,v_1) =\varphi(\star,v_2)$, a contradiction.

\section*{Acknowledgement}

The first submitted version of the paper studies the so called ``double cone over a graph $G$'', which is the special case of $\Delta_{H,h}(G)$ for $H=K_2$. A referee  suggested us to study the more general graph   $\Delta_{H,h}(G)$, and commented that our result on $\Delta_{K_2,n}(G)$ for even integer $n$ remains true for $\Delta_{H,n}(G)$, provided that $\chi_f(H) \le \chi_f(G)$. We thank the referee for this nice suggestion and for many insightful comments.

\end{document}